\documentclass[12pt,a4paper]{article}%
\usepackage[utf8]{inputenc}
\usepackage{hyperref}
\usepackage{amsmath}
\usepackage{amsfonts}
\usepackage{amssymb}
\usepackage{xcolor}
\usepackage{graphicx}%
\providecommand{\U}[1]{\protect\rule{.1in}{.1in}}
\newtheorem{theorem}{Theorem}

\newtheorem{corollary}[theorem]{Corollary}

\newtheorem{definition}[theorem]{Definition}

\newtheorem{lemma}[theorem]{Lemma}

\newtheorem{proposition}[theorem]{Proposition}

\newenvironment{proof}[1][Proof]{\noindent\textbf{#1.} }{\ \hfill \rule{0.5em}{0.5em}\bigskip}
\graphicspath{{Slike/}{D:/Dropbox/Riste-Sedlar/20260716 Sigma Irregularity/Slike/}}

\begin{document}

\title{A complete characterization of maximally $\sigma$-irregular trees with
prescribed maximum degree}
\author{Martin Knor$^{1}$, Jelena Sedlar$^{2,3}$, Riste \v{S}krekovski$^{3,4}$\\{\small $^{1}$ \textit{Slovak University of Technology in Bratislava,
Slovakia}}\\[0.1cm] {\small $^{2}$ \textit{University of Split, Faculty of civil
engineering, architecture and geodesy, Croatia}}\\[0.1cm] {\small $^{3}$ \textit{Faculty of Information Studies, Novo mesto,
Slovenia}}\\[0.1cm] {\small $^{4}$ \textit{University of Ljubljana, Faculty of Mathematics
and Physics, Slovenia}}}
\date{}
\maketitle

\begin{abstract}
The $\sigma$-irregularity of a graph $G=(V,E)$ is defined as $\sigma
(G)=\sum_{uv\in E}(d(u)-d(v))^{2},$ where $d(u)$ denotes the degree of a
vertex $u.$
A tree on $n$ vertices with the maximum degree $\Delta$ is called
\emph{maximal} if it attains the greatest possible value of $\sigma
$-irregularity among all such trees.
The maximal trees are already known for
chemical trees, i.e. for $\Delta\leq4,$ and for $\Delta=5$.
In this paper we
characterize the maximal trees for every $\Delta\geq6$ when $n\geq
\Delta(\Delta-1)+1$.
We introduce three families of trees $\mathcal{T}_{n,\Delta
}^{\prime}$, $\mathcal{T}_{n,\Delta}^{\prime\prime}$ and $\mathcal{T}%
_{n,\Delta}^{\prime\prime\prime}$.
All the trees of the same family have the same $\sigma$-irregularity, and we
give an explicit formula for each of the three values.
Comparing the formulas tells which of the three values is the greatest for
given $n$ and $\Delta$.
We then show that a tree is maximal if and only if it belongs to a family
attaining that greatest value.
In contrast with the case $\Delta\leq5,$ the two natural candidate families
$\mathcal{T}_{n,\Delta}^{\prime}$ and $\mathcal{T}_{n,\Delta}^{\prime\prime}$
do not suffice, since for every $\Delta\geq7$ and every $n\equiv3\pmod{\Delta}$
the trees of $\mathcal{T}_{n,\Delta}^{\prime\prime\prime}$ have strictly
greater $\sigma$-irregularity.
\end{abstract}

\textit{Keywords:} $\sigma$-irregularity; trees; maximum degree; extremal graphs.

\textit{AMS Subject Classification numbers:} 05C09; 05C35.

\section{Introduction}

The graphs under consideration are simple and finite. Let $V(G)$ and $E(G)$
represent the vertex and edge sets of a graph $G$, respectively.
For a vertex $v\in V(G)$, we denote its degree in $G$ as $d_{G}(v)$.
We may omit the subscript $G$ when the context is clear.

A graph $G$ is \emph{regular} if all its vertices have the same
degree, and otherwise it is \emph{irregular}.
An \emph{irregularity measure} or \emph{irregularity index} $\mathrm{I}(G)$
is a topological invariant such that $\mathrm{I}(G) \geq0$, and
$\mathrm{I}(G) = 0$ holds if and only if $G$ is regular.
Various irregularity measures play an important role in many scientific areas
including chemistry and network theory \cite{cfar-cnlgacmi-2014, e-qnh-2010,
e-riicbn-2010, rsdh-giumdqsrps-2018, s-dvigh-1981}. Among the best-known and
most thoroughly investigated irregularity measures is the \emph{Albertson
irregularity index} \cite{AlbertsonIrr} defined as
\[
\mathrm{irr}(G)=\sum_{uv\in E(G)}\left\vert d_{G}(u)-d_{G}(v)\right\vert ,
\]
see \cite{Dimit-Abdo1, ad-iggo-2014, Bell1, GHM05, HansenMelot}.

In order to avoid the absolute value in the Albertson irregularity
index, one naturally arrives to the \emph{$\sigma$-irregularity index}
\[
{\sigma}(G)=\sum_{uv\in E(G)}(d_{G}(u)-d_{G}(v))^{2}.
\]
The first results on $\sigma$-irregularity index were obtained by Gutman et al.
in \cite{gtycc-ipsi}.
Graphs with maximal $\sigma$-irregularity were
characterized in \cite{adg-gmsi-2018}, where also some lower bounds on the
$\sigma$-irregularity were presented.
Inverse problem for the $\sigma$-irregularity was resolved in \cite{adg-gmsi-2018, gtycc-ipsi}.
Further, R\'{e}ti \cite{r-spgiiprsi-2019} compared $\sigma$-irregularity
with several other irregularity measures.
Connected $k$-cyclic graphs with maximal $\sigma$-irregularity were determined in
\cite{aaabh-msikcgi-2023}, and trees with maximal $\sigma$-irregularity under
a prescribed degree sequence were studied in \cite{dglc-etfdssi-2023}.

The problem of maximizing the $\sigma$-irregularity becomes particularly
interesting for trees with prescribed maximum degree $\Delta$. Chemical trees,
i.e. trees with $\Delta\leq4$, with maximal $\sigma$-irregularity were
characterized in \cite{kpvsd-sict-2024}.
This result was later extended to trees with
the maximum degree $\Delta=5$ in \cite{ksvsd-sipmt-2026}.
Very recently, the maximal trees with $\Delta=6$ were characterized for every
residue class of $n$ modulo $6$ \cite{b-tmsi-2026}.
For general $\Delta \geq4$, the exact maximum value of $\sigma$-irregularity was determined by a linear programming approach for the orders $n\equiv0$ and $n\equiv
1\pmod{\Delta}$ \cite{b-msit-2026}.

This paper is concerned with the \emph{general} case $\Delta\geq6$, which
remained open after \cite{ksvsd-sipmt-2026}. Throughout, a tree $T$ with
maximum degree $\Delta$ is called \emph{maximal} if it attains the largest
possible value of $\sigma(T)$ among all trees on $\left\vert V(T)\right\vert $
vertices with maximum degree $\Delta$.
For every $\Delta\geq6$ and every sufficiently large $n$, more precisely for
every $n=k\Delta+1+j$ with $k\geq\Delta-1$ and $0\leq j\leq\Delta-1$, we
establish the exact value of the maximum $\sigma$-irregularity and we
characterize all the trees attaining it. Namely, we introduce three explicitly
described families of trees, each of them having a constant value of the
$\sigma$-irregularity, and we prove that a tree on $n$ vertices with the
maximum degree $\Delta$ is maximal if and only if it belongs to a family among
those three whose value of the $\sigma$-irregularity is the greatest.

The paper is
organized as follows.
In Section~2 we introduce three families of trees
$\mathcal{T}_{n,\Delta}^{\prime}$, $\mathcal{T}_{n,\Delta}^{\prime\prime}$ and
$\mathcal{T}_{n,\Delta}^{\prime\prime\prime}$, and we state the main result of
the paper.
We also establish the exact maximum value of $\sigma$-irregularity.
Section~3 is devoted to the proof of the main theorem.
We first recall the structural properties of maximal trees from \cite{ksvsd-sipmt-2026} which hold for every $\Delta\geq3$, and we establish several further structural properties holding for $\Delta\geq 6$.
Finally, we combine these structural results to prove the main theorem.
We conclude the paper with some remarks in Section~4.


\section{Extremal families of trees}

In a tree, a vertex of degree at least $2$ is \emph{dominant
end-vertex} if it has precisely one neighbor with degree greater than $1$.
Following \cite{ksvsd-sipmt-2026}, sometimes we call a dominant end-vertex simply
an \emph{internal leaf}.
Let $T$ be a tree and  $v\in V(T)$.
If $d(v)=k$, then $u$ is a $k$-vertex.
Let $n_{i}$ denote the number of vertices in $T$ of degree $i$, where $i$
ranges from $1$ to $\Delta$.
Similarly, let $m_{ij}$, for $1\leq i\leq
j\leq\Delta$, be the number of edges in $T$ with end-vertices of
degrees $i$ and $j$.

An edge $uv$ of a tree $T$ is \emph{optimal} (resp. \emph{suboptimal}) if $(d(u)-d(v))^{2}$ equals $(\Delta-1)^{2}$ (resp. $(\Delta-2)^{2}$).
Observe that the
contribution of an optimal edge to $\sigma(T)$ is largest possible and the
contribution of a suboptimal edge is the second largest. An edge is optimal if
the degrees of its two end-vertices are $\Delta$ and $1$. On the other hand,
an edge $uv$ can be suboptimal in two ways:

\begin{itemize}
\item[$(1)$] $d(u)=\Delta$ and $d(v)=2$, in which case it is a \emph{good}
suboptimal edge;

\item[$(2)$] $d(u)=\Delta-1$ and $d(v)=1$, in which case it is a \emph{bad}
suboptimal edge.
\end{itemize}

\noindent A tree is \emph{nice} if it contains only optimal and suboptimal edges.
Number $n$ is \emph{nice} if $n=k\Delta+1$ for some integer $k\geq1$.

\begin{lemma}
\label{Lemma_badSuboptimal}
A nice tree with maximum degree $\Delta\geq4$
on $n\geq\Delta+1$ vertices does not contain bad suboptimal edges.
\end{lemma}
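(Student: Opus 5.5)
In a nice tree every edge is optimal ($\Delta$–$1$), good suboptimal ($\Delta$–$2$) or bad suboptimal ($(\Delta-1)$–$1$). So every vertex has degree in $\{1,2,\Delta-1,\Delta\}$, and we classify each vertex by its degree and the degrees of its neighbours.

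\textbf{Local constraints.} We list what each degree class may be adjacent to.
\begin{itemize}
\item A $2$-vertex can only lie on good suboptimal edges, so both its neighbours are $\Delta$-vertices.
\item A $(\Delta-1)$-vertex can only lie on bad suboptimal edges, so all its neighbours are leaves. Since $\Delta\ge4$, the degrees $\Delta-1$ and $2$ are distinct and $\Delta-1\ge3$, so this is a genuine restriction.
\item A $\Delta$-vertex is adjacent only to leaves and $2$-vertices.
\item A leaf is adjacent to a $\Delta$-vertex or to a $(\Delta-1)$-vertex.
\end{itemize}

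\textbf{Main step.} Suppose $T$ contains a bad suboptimal edge $uv$ with $d(u)=\Delta-1$ and $d(v)=1$. By the second constraint all $\Delta-1$ neighbours of $u$ are leaves. Hence the connected tree $T$ consists of $u$ and these leaves, that is, $T\cong K_{1,\Delta-1}$ with $\Delta$ vertices. This contradicts $n\ge\Delta+1$.

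One could also phrase this as "$T$ must contain a $\Delta$-vertex, which cannot be reached from $u$". The direct observation that $u$ together with its leaf neighbours is already a whole component is cleaner, so we use that.

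\textbf{Checks.} The only delicate points are to confirm that no other degree combination yields squared difference $(\Delta-1)^2$ or $(\Delta-2)^2$, and that the hypothesis $\Delta\ge4$ rules out coincidences.
\begin{itemize}
\item Degrees $a\le b$ with $b-a=\Delta-1$ and $1\le a$, $b\le\Delta$ force $(a,b)=(1,\Delta)$.
\item Degrees with $b-a=\Delta-2$ force $(a,b)=(1,\Delta-1)$ or $(2,\Delta)$.
\end{itemize}
So the classification of edges is exhaustive. We need $\Delta-1\ne2$ (true for $\Delta\ge4$) so that a $(\Delta-1)$-vertex cannot also be the middle vertex of a good suboptimal edge. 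We also need $\Delta-1\ne1$, so that a $(\Delta-1)$-vertex is not itself a leaf. Both hold, and the argument is complete.
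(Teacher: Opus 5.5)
Your proof is correct and is essentially the paper's argument. The paper uses $n\ge\Delta+1$ to give the $(\Delta-1)$-vertex $u$ a non-leaf neighbour $v$, and then $2\le d(v)\le\Delta$ yields $|d(u)-d(v)|\le\Delta-3$, so $uv$ is neither optimal nor suboptimal; your ``all neighbours of $u$ are leaves, so $T\cong K_{1,\Delta-1}$'' is the same observation stated contrapositively.
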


\begin{proof}
Let $T$ be a nice tree.
Assume to the contrary, that $T$ contains a bad
suboptimal edge.
Then $T$ must have a vertex, say $u$, of degree $\Delta-1$.
Since $n\geq\Delta+1$, at least one neighbor of $u$ is not a leaf, denote it by $v$. Since $2\leq d(v)\leq\Delta$
and $\Delta\geq4$ imply $\left\vert d(u)-d(v)\right\vert \leq\Delta
-3$, the edge $uv$ of $T$ is neither optimal nor suboptimal, a
contradiction with $T$ being nice.
\end{proof}

\begin{lemma}
\label{Lemma_niceExists}
Let $\Delta\geq4$ and let $n\geq\Delta+1$.
There exists a nice tree $T$ on $n$ vertices if and only if
$n$ is nice, i.e. if $n=k\Delta+1$.
Moreover, $T$ contains precisely $k$ vertices of degree $\Delta$.
\end{lemma}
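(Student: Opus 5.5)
By Lemma~\ref{Lemma_badSuboptimal}, a nice tree $T$ on $n\geq\Delta+1$ vertices has only optimal edges (between a $\Delta$-vertex and a leaf) and good suboptimal edges (between a $\Delta$-vertex and a $2$-vertex). The plan is to use this to pin down the degree set of $T$, count vertices by a handshake argument, and then give an explicit construction for the converse.

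\emph{Degree set.} Every edge has exactly one end of degree $\Delta$, and the other end has degree $1$ or $2$. So every vertex has degree in $\{1,2,\Delta\}$, and the $\Delta$-vertices form an independent set. Every $2$-vertex has both neighbours of degree $\Delta$. Also $T$ has at least one $\Delta$-vertex: since $n\geq\Delta+1\geq 3$, $T$ has an edge, and that edge has a $\Delta$-end.

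\emph{Counting.} Write $k=n_\Delta$. Because each edge is incident to exactly one $\Delta$-vertex, the $\Delta$-vertices partition the edge set, so $|E(T)|=k\Delta$. Since $T$ is a tree, $n-1=k\Delta$, i.e. $n=k\Delta+1$. This gives both that $n$ is nice and that $T$ has precisely $k$ vertices of degree $\Delta$. As a consistency check, contracting each $2$-vertex shows the $\Delta$-vertices form a tree via the $2$-vertices, so $n_2=k-1$ and $n_1=k\Delta-2(k-1)$; but the edge-partition count alone already gives $n=k\Delta+1$.

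\emph{Converse.} Let $n=k\Delta+1$ with $k\geq1$. Take a path $v_1,\dots,v_k$ and subdivide each of its $k-1$ edges once, producing $2$-vertices. Each $v_i$ then has degree $1$ or $2$, so attach pendant leaves to every $v_i$ until it has degree $\Delta$. Every edge now joins a $\Delta$-vertex to a $1$- or $2$-vertex, so the tree is nice. Its vertex count is $k$ hubs, plus $k-1$ subdivision vertices, plus $k\Delta-2(k-1)$ leaves, which totals $k\Delta+1$. The case $k=1$ is the star $K_{1,\Delta}$, which is consistent with the construction.

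There is no real obstacle here. The only point needing care is the use of $\Delta\geq4$, via Lemma~\ref{Lemma_badSuboptimal}, to exclude $(\Delta-1)$-vertices. Without that lemma, the "exactly one $\Delta$-end per edge" argument that drives the count would fail.
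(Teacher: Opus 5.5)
Your proof is correct. It follows the paper's outline: Lemma~\ref{Lemma_badSuboptimal} restricts the edges to $(\Delta,1)$ and $(\Delta,2)$ types, then a count, then essentially the same construction for the converse (a path of $\Delta$-vertices with each edge subdivided once, padded with pendant leaves).

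The genuine difference is in the counting step. The paper sets up a linear system from $n=n_\Delta+n_2+n_1$, the handshake identity, and $n_2=n_\Delta-1$. The last of these rests on the $2$-vertices joining the $\Delta$-vertices into a tree, an argument the paper leaves brief. It then solves for $n_\Delta=(n-1)/\Delta$. You instead note that every edge has exactly one end of degree $\Delta$. So the $\Delta$-vertices partition $E(T)$, and $n-1=\Delta n_\Delta$ follows at once. This is shorter and needs no information about how the $2$-vertices sit in the tree.

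The paper's system does deliver $n_2=k-1$ and $n_1$ along the way, and these are used immediately after the lemma to compute $\sigma$ of a nice tree. Your side remark about suppressing the $2$-vertices recovers exactly these values, so nothing is lost.
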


\begin{proof}
Denote by $n_{i}$ the number of vertices of degree $i$ in $T$.
If $T$ is
nice, then Lemma \ref{Lemma_badSuboptimal} implies that every edge of $T$ is
either optimal or good suboptimal, so every vertex of $T$ has its degree in
the set $\{1,2,\Delta\}$, i.e. $n=n_{\Delta}+n_{2}+n_{1}$.
Also, every vertex
of degree $2$ is adjacent to two vertices of degree $\Delta$, hence
$n_{2}=n_{\Delta}-1$.
The Handshaking lemma implies $\Delta n_{\Delta}%
+2n_{2}+n_{1}=2(n-1)$.
We obtained a system of three linear equations with a unique solution $n_{\Delta}=\frac{1}{\Delta}\left(
n-1\right)$.
This implies $n=\Delta n_{\Delta}+1$, so $n$ is nice.

On the other hand, let $n=k\Delta+1$ be nice.
We construct a nice tree on
$n$ vertices by first forming a path $P=v_{1}u_{2}v_{2}\ldots u_{k}v_{k}$, and
then attaching $\Delta-2$ leaves to each vertex $v_{i}$ for $1\leq i\leq k$.
Finally, we attach one more leaf to $v_{1}$ and another new leaf  $v_{k}$.
The constructed tree has $n=k\Delta+1$ vertices and it is nice.
\end{proof}

In a nice tree $m=m_{\Delta1}+m_{\Delta2}$, where $m_{\Delta
2}=2(k-1)$ and $m_{\Delta1}=n-1-2(k-1)$.
So for a nice tree $T$
\begin{align*}
\sigma(T) &= 2(k-1)(\Delta-2)^{2}+(n-2k+1)(\Delta-1)^{2}\\
&= \Delta^3k-2\Delta^2k-3\Delta k+6k+4\Delta-6.
\end{align*}
For given $\Delta$, the value of $\sigma(T)$ depends only on $n$.
So every nice tree on $n$ vertices has the same value of $\sigma$-irregularity.
As Theorem~{\ref{Tm_general_exact}} below establishes, in the case of nice $n$ a tree is indeed maximal if and only if it is nice.
It remains to consider the number of vertices $n$ which is not nice.
For that purpose, we define three families of
graphs $\mathcal{T}_{n,\Delta}^{\prime}$, $\mathcal{T}_{n,\Delta}^{\prime\prime}$ and $\mathcal{T}_{n,\Delta}^{\prime\prime\prime}$, where for the sake of completeness we point out that in the case of nice $n$ all nice trees are in
$\mathcal{T}_{n,\Delta}^{\prime\prime}$.

\begin{definition}
\label{Def_Tprime}
Let $n=k\Delta+1+j,$ where $k\geq\Delta-1$ and $1\leq
j\leq\Delta-1$, be a non-nice integer. The family $\mathcal{T}_{n,\Delta
}^{\prime}$ consists of all trees on $n$ vertices with maximum degree $\Delta$
which have the following properties:

\begin{itemize}
\item[$(P_{1}^{\prime})$] all internal leaves of $T$ have degree
$\Delta$;

\item[$(P_{2}^{\prime})$] $m_{22}\leq2$ and $m_{\Delta\Delta}=0$;

\item[$(P_{3}^{\prime})$] $\sum_{i=3}^{\Delta-1}n_{i}\leq1$;

\item[$(P_{4}^{\prime})$] $m_{22}=0$ if and only if $\sum_{i=3}^{\Delta
-1}n_{i}=n_{a}=1$ for some $a\in\{3,\ldots,\Delta-1\}$;

\item[$(P_{5}^{\prime})$] if $m_{22}=0$, then $m_{a2}=a$, where $a$ is the
degree from $(P_{4}^{\prime})$.
\end{itemize}
\end{definition}

\begin{proposition}
\label{Prop_T1}
Let $n=k\Delta+1+j,$ where $k\geq\Delta-1$ and $1\leq
j\leq\Delta-1$, be a non-nice integer.
Then $\mathcal{T}_{n,\Delta}^{\prime}$ is not empty and all trees in $\mathcal{T}_{n,\Delta}^{\prime}$ have the same value of $\sigma$-irregularity.
\end{proposition}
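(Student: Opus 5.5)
The plan is to show that the five properties of Definition~\ref{Def_Tprime} force the whole \emph{edge-type vector} $(m_{ij})$ to be a function of $n$ and $\Delta$ alone. Since $\sigma(T)=\sum m_{ij}(i-j)^{2}$, this gives the second claim. An explicit construction realizing that vector then gives nonemptiness. I will use the ``skeleton'' viewpoint already implicit in the proof of Lemma~\ref{Lemma_niceExists}: the vertices of degree at least $3$ form a tree once the paths of $2$-vertices between them are contracted.

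\textbf{Local structure.} Let $T\in\mathcal{T}_{n,\Delta}^{\prime}$.
\begin{itemize}
\item A $2$-vertex adjacent to a leaf would be an internal leaf of degree $2$, contradicting $(P_1^{\prime})$. So every leaf is adjacent to a vertex of degree at least $3$, and both neighbours of every $2$-vertex have degree at least $2$.
\item By $(P_3^{\prime})$ there is at most one vertex of degree in $\{3,\dots,\Delta-1\}$.
\item Call the vertices of degree at least $3$ \emph{big}. Replacing each maximal path of $2$-vertices by a single edge gives a skeleton tree $S$ on the big vertices.
\item Since $m_{\Delta\Delta}=0$, every skeleton edge between two $\Delta$-vertices is subdivided at least once. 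A skeleton edge is subdivided at least twice only if it produces $22$-edges, so $(P_2^{\prime})$ bounds the total excess subdivision by $m_{22}\le 2$.
\end{itemize}

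\textbf{Case $m_{22}=0$.} By $(P_4^{\prime})$ there is exactly one $a$-vertex, with $3\le a\le\Delta-1$, and by $(P_5^{\prime})$ all of its neighbours are $2$-vertices. Hence there are no leaves at it and no $\Delta a$ edges, and every skeleton edge is subdivided exactly once, so $n_2=n_\Delta$. The $\Delta$-vertices meet exactly $2n_\Delta-a$ edges of type $\Delta2$, and all their remaining edges go to leaves, so $n_1=\Delta n_\Delta-(2n_\Delta-a)$. Summing the vertex counts gives
\[
n=\Delta n_\Delta+1+a .
\]
Because $3\le a\le\Delta-1$, this forces $a=j$ (so $j\ge3$) and $n_\Delta=k$. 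Consequently
\[
m_{a2}=j,\qquad m_{\Delta2}=2k-j,\qquad m_{\Delta1}=\Delta k-2k+j .
\]

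\textbf{Case $m_{22}\in\{1,2\}$.} By $(P_4^{\prime})$ there are no degrees strictly between $2$ and $\Delta$. The skeleton has $n_\Delta-1$ edges, subdivided $(n_\Delta-1)+m_{22}$ times in total. The same count gives
\[
n=\Delta n_\Delta+1+m_{22},
\]
which forces $m_{22}=j\in\{1,2\}$ and $n_\Delta=k$. Consequently
\[
m_{\Delta2}=2(k-1),\qquad m_{22}=j,\qquad m_{\Delta1}=n-1-2(k-1)-j .
\]

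In both cases the edge-type counts depend only on $(n,\Delta)$, hence so does $\sigma$. As a by-product, membership in $\mathcal{T}_{n,\Delta}^{\prime}$ forces $m_{22}=j$ when $j\le2$, and forces a single $j$-vertex when $j\ge3$. I will also record this dichotomy.

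\textbf{Nonemptiness.}
\begin{itemize}
\item For $j\in\{1,2\}$: take the nice tree built in Lemma~\ref{Lemma_niceExists} on $k\Delta+1$ vertices and subdivide one of its $\Delta2$ edges $j$ times.
\item For $j\ge3$: take $k$ vertices of degree $\Delta$ and one vertex $w$ of degree $j$. Build a skeleton tree on these $k+1$ vertices with $w$ of skeleton-degree $j$; a star-like attachment of $w$ to $j$ of the $\Delta$-vertices plus a path through the rest works, and it needs $k\ge j$, which holds since $k\ge\Delta-1\ge j$. Keep every $\Delta$-vertex's skeleton-degree at most $\Delta$, subdivide every skeleton edge once, and pad each $\Delta$-vertex with leaves up to degree $\Delta$.
\end{itemize}
A direct check shows that these trees satisfy $(P_1^{\prime})$--$(P_5^{\prime})$ and have $n$ vertices.

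The main obstacle is making the subdivision count airtight. One must verify that $(P_1^{\prime})$ together with $m_{\Delta\Delta}=0$ excludes every other edge type (such as $21$, $\Delta a$, $a1$, $22$-chains at the $a$-vertex, or a degree-$\Delta$ vertex as an internal leaf next to a $2$-vertex), so that the counting identities above are exact.
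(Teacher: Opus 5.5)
Your proposal is correct and follows essentially the paper's route. You suppress the $2$-vertices and use $(P_1^{\prime})$--$(P_5^{\prime})$ to pin down the number of subdivisions. This gives $n=\Delta n_\Delta+1+m_{22}$ (resp.\ $n=\Delta n_\Delta+1+a$), which forces $n_\Delta=k$ and $m_{22}=j$ (resp.\ $a=j$). You then realize the family with the same subdivided-path and star-plus-path constructions. The one difference is that you count directly on the skeleton, where the paper solves the equivalent linear system via the Handshaking lemma. One small slip in your closing remark: a $\Delta$-vertex that is an internal leaf adjacent to a $2$-vertex is not a configuration to be excluded (it occurs in your own constructions), but none of your counting depends on excluding it.
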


\begin{proof}
First we prove that all trees in $\mathcal{T}_{n,\Delta
}^{\prime}$ have the same value of $\sigma$-irregularity. It is sufficient to
prove that for a tree $T\in\mathcal{T}_{n,\Delta}^{\prime}$, the value of
$\sigma(T)$ depends only on $n$. Notice that $n=k\Delta+1+j$ and $j\leq
\Delta-1$ imply that $j$ and $k$ depend only on $n$.
So if we show that $\sigma(T)$ depends only on $\Delta$, $j$ and $k$, that also means that $\sigma(T)$ depends only on $n$.

Assume first that $m_{22}\not =0$.
We wish to show that $n_{i}$ depends only on $n$ for every $i$. First, properties $(P_{3}^{\prime})$ and $(P_{4}^{\prime})$ imply that $T$ contains only vertices of degrees $1$, $2$ and $\Delta$, i.e. $n_{i}=0$ for $3\leq i\leq\Delta-1$.
Hence, $n_{\Delta}+n_{2}+n_{1}=n$.
By the Handshaking lemma we have $\Delta n_{\Delta}+2n_{2}+n_{1}=2(n-1)$.
Next, by suppressing all vertices of degree two in
$T$, we obtain a tree $T^{\prime}$ which has the same number of vertices of
degree $\Delta$ and $1$ as $T$.
Hence, $n_{1}=\Delta n_{\Delta}-2(n_{\Delta
}-1)$. Finally, property $(P_{1}^{\prime})$ implies $m_{12}=0$, so the
neighbors of a $2$-vertex can only be $\Delta$-vertices and $2$-vertices.
Since $m_{22}\leq2$ and $m_{\Delta\Delta}=0$ according to the property $(P_{2}^{\prime})$, $2$-vertices of $T$ are obtained by subdividing edges of $T^{\prime}$ with both end-vertices of degree $\Delta$ by precisely one $2$-vertex, and then
additional two $2$-vertices can be added.
Hence, $n_{2}=n_{\Delta}-1+t$ for $1\leq t\leq2$. 
(The case $t=0$ is excluded since $n$ is not nice.)
We obtain a system of four linear equations with the unique
solution
\begin{align*}
n_{1}  &  =\frac{1}{\Delta}\left(n\Delta-2n-\Delta t+2t+\Delta+2\right)  ,\\
n_{2}  &  =\frac{1}{\Delta}\left(n+\Delta t-t-\Delta-1\right)  ,\\
n_{\Delta}  &  =\frac{1}{\Delta}\left(n-t-1\right)  .
\end{align*}
Since $n_{\Delta}$ must be an integer and $n=k\Delta+1+j$, where $j\leq
\Delta-1$, we conclude that $t=j$ and $n_{\Delta}=k$. Notice that $t\leq2$
implies that trees with $m_{22}\not =0$ can occur in $\mathcal{T}_{n,\Delta
}^{\prime}$ only for $j\leq2$. So, we have established that $n_{1}$, $n_{2}$
and $n_{\Delta}$ depend only on $n$, and not on $T$.

Notice that $\sigma(T)=m_{\Delta1}(\Delta-1)^{2}+m_{\Delta2}(\Delta-2)^{2}$,
so next we wish to show that $m_{\Delta1}$ and $m_{\Delta2}$ also depend only
on $n$.
We have $m_{\Delta1}=n_{1}$, so $m_{\Delta1}$ depends only on $n$.
Since $m_{12}=0$ and $m_{\Delta\Delta}=0$, we have $m_{\Delta2}=n-1-m_{\Delta 1}-m_{22}$.
Notice that $m_{22}=t$, so $t=j$ implies that $m_{22}$ is determined by $n$.
We conclude that $m_{\Delta2}$ depends only on $n$. Hence, $\sigma(T)$
depends only on $n$ and we are done.

Assume next that $m_{22}=0$. Then by $(P_{4}^{\prime})$ we have $\sum
_{i=3}^{\Delta-1}n_{i}=n_{a}=1$ for some $a\in\{3,\ldots,\Delta-1\}$, so
$n=n_{\Delta}+n_{2}+n_{1}+1$. By Handshaking lemma we have $\Delta n_{\Delta
}+a+2n_{2}+n_{1}=2(n-1)$. Since $m_{21}=0$, each $2$-vertex of $T$ is neighbor
of $\Delta$-vertices and $a$-vertices. Properties $(P_{2}^{\prime})$ and
$(P_{5}^{\prime})$ imply that precisely $a$ $2$-vertices of $T$ are neighbors
of one $a$-vertex and one $\Delta$-vertex, and all other $2$-vertices are
neighbors of two $\Delta$-vertices.
Hence, $n_{2}=n_{\Delta}$.
We obtain a system of three linear equations with the unique solution%
\[
n_{1}=\frac{1}{\Delta}\left(n\Delta-2n-\Delta+2a+2\right)  ,\text{
\ \ }n_{2}=n_{\Delta}=\frac{1}{\Delta}\left(  n-a-1\right)  .
\]
Since $n_{\Delta}$ must be an integer and $n=k\Delta+1+j$, where $j\leq
\Delta-1$, we conclude that $a=j$. Notice that $a\geq3$ implies that trees
with $m_{22}=0$ can occur in $\mathcal{T}_{n,\Delta}^{\prime}$ only for
$j\geq3$. So, we have that $n_{\Delta}$, $n_{2}$ and $n_{1}$ depend only on
$n$.

Notice that $(P_{5}^{\prime})$ implies $m_{\Delta a}=m_{a1}=0$. Hence,
$\sigma(T)=m_{\Delta1}(\Delta-1)^{2}+m_{\Delta2}(\Delta-2)^{2}+m_{a
2}(a-2)^{2}$. Since $m_{\Delta1}=n_{1}$, we have that $m_{\Delta1}$ depends
only on $n$.
Further, $m_{a2}=a=j$ implies that $m_{a2}$ also depends only on $n$.
Finally, $m_{\Delta2}=n-1-m_{\Delta1}-m_{a2}$, so $m_{\Delta2}$ as well depends
only on $n$. We conclude that $\sigma(T)$ depends only on $n$.

It remains to show that the family $\mathcal{T}_{n,\Delta}^{\prime}$ is not
empty.
For $j\leq2$, let $T$ be a tree obtained from a path $z_{1}%
z_{2}\cdots z_{k}$ by first subdividing each of its edges with precisely one
new vertex, then subdividing the edge incident to $z_{1}$ with $j$ further new
vertices, and finally attaching leaves to each $z_{i}$ so that the degree of
$z_{i}$ in $T$ equals $\Delta$.
For $j\geq3$, let $T$ be a tree obtained
from the star with the center $w$ and leaves $z_{1},\ldots,z_{j}$, extended by
the path $z_{j}z_{j+1}\cdots z_{k}$, by subdividing each of its $k$ edges with
precisely one new vertex and attaching leaves to each $z_{i}$ so that the
degree of $z_{i}$ in $T$ equals $\Delta$. Notice that in both cases
$k\geq\Delta-1\geq j$ implies that the construction is well defined, where for
$k=j$ the extending path is trivial.
The number of vertices of the constructed tree equals $k\Delta+1+j=n$, and the tree has all the properties $(P_{1}^{\prime})$--$(P_{5}^{\prime})$.
\end{proof}

\begin{definition}
\label{Def_Tdprime}
Let $n=k\Delta+1+j,$ where $k\geq\Delta-1$ and $0\leq j\leq\Delta-1$.
The family $\mathcal{T}_{n,\Delta}^{\prime\prime}$ consists of all trees on $n$ vertices with maximum degree $\Delta$ which have the following properties:

\begin{itemize}
\item[$(P_{1}^{\prime\prime})$] all internal leaves of $T$ have degree
$\Delta$;

\item[$(P_{2}^{\prime\prime})$] all vertices of $T$ have their degree in the
set $\{1,2,\Delta\}$;

\item[$(P_{3}^{\prime\prime})$] $m_{22}=0$ and $m_{\Delta\Delta}\leq\Delta-1$.
\end{itemize}
\end{definition}

Observe that in Definition~{\ref{Def_Tdprime}} we admit $n$ to be nice.
So let $n=k\Delta+1$.
If $n_{\Delta}\le k-1$ then $n_1\le\Delta+(k-2)(\Delta-2)$ and so $n_2\ge\Delta+k-2$ contradicting $m_{22}=0$.
On the other hand if $n_{\Delta}\ge k+1$ then $n_1\ge\Delta+k(\Delta-2)$ and so $n_2\le k-\Delta$ contradicting $m_{\Delta\Delta}\le\Delta-1$.
So $n_{\Delta}=k$, $n_1=\Delta+(k-1)(\Delta-2)$ and $n_2=k-1$ implying $m_{\Delta\Delta}=0$.
So if $n$ is nice, $\mathcal{T}_{n,\Delta}^{\prime\prime}$ consists of nice trees.

\begin{proposition}
\label{Prop_T2}Let $n=k\Delta+1+j,$ where $k\geq\Delta-1$ and $1\leq
j\leq\Delta-1$, be a non-nice integer.
Then $\mathcal{T}_{n,\Delta}^{\prime\prime}$ is not empty and all trees in $\mathcal{T}_{n,\Delta}^{\prime\prime}$ have the same value of $\sigma$-irregularity.
\end{proposition}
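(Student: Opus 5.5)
Following the proof of Proposition~\ref{Prop_T1}, I will show that for $T\in\mathcal{T}_{n,\Delta}^{\prime\prime}$ the numbers $n_1,n_2,n_\Delta$ and the edge counts $m_{\Delta1},m_{\Delta2},m_{\Delta\Delta}$ are determined by $n$ alone. By $(P_2^{\prime\prime})$ we have $n=n_1+n_2+n_\Delta$. The Handshaking lemma gives $\Delta n_\Delta+2n_2+n_1=2(n-1)$. Property $(P_1^{\prime\prime})$ forces $m_{12}=0$: a $2$-vertex adjacent to a leaf would be an internal leaf of degree $2$, unless $T$ is a path, which is excluded since the maximum degree is $\Delta$. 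Together with $m_{22}=0$, every $2$-vertex therefore has two $\Delta$-neighbours.

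Next I suppress all $2$-vertices. This gives a tree $T'$ whose vertices are the $n_\Delta$ vertices of degree $\Delta$ and the $n_1$ leaves. Its $n_\Delta-1$ edges joining two $\Delta$-vertices split into $n_2$ subdivided edges and $m_{\Delta\Delta}$ unsubdivided ones. Hence
\[
n_2+m_{\Delta\Delta}=n_\Delta-1, \qquad n_1=\Delta n_\Delta-2(n_\Delta-1).
\]
Writing $s=m_{\Delta\Delta}$ and solving the linear system gives $n=\Delta n_\Delta+1-s$, that is, $n_\Delta=(n-1+s)/\Delta$. Since $0\le s\le\Delta-1$ and $n-1=k\Delta+j$, integrality of $n_\Delta$ determines $s$ uniquely. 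For $1\le j\le\Delta-1$ this gives $s=\Delta-j$ and $n_\Delta=k+1$. Then $n_2=k-\Delta+j$ and $n_1=(k+1)(\Delta-2)+2$ are also fixed by $n$.

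Finally, $m_{\Delta1}=n_1$, $m_{\Delta2}=2n_2$ and $m_{\Delta\Delta}=\Delta-j$, so
\[
\sigma(T)=n_1(\Delta-1)^2+2n_2(\Delta-2)^2
\]
depends only on $n$. The main point to check here is that $n_2=k-\Delta+j\ge0$; this holds because $k\ge\Delta-1$ and $j\ge1$.

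For non-emptiness I will construct a tree explicitly. Take a path $z_1z_2\cdots z_{k+1}$ on $k+1$ vertices, which has $k$ edges. Subdivide exactly $k-(\Delta-j)=n_2\ge0$ of these edges once each, leaving $\Delta-j\le k$ edges unsubdivided. Then attach leaves so that every $z_i$ has degree $\Delta$; this is possible since $\Delta\ge3$ and each $z_i$ has at most two path-neighbours. In the resulting tree the only degrees are $1$, $2$ and $\Delta$, there are no $22$-edges, $m_{\Delta\Delta}=\Delta-j\le\Delta-1$, and all internal leaves are $z_1$ and $z_{k+1}$, which have degree $\Delta$. Its order is $(k+1)+n_2+n_1=n$ by the counts above. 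The step needing the most care is the uniqueness argument for $s$ via divisibility, together with confirming that $n_2\ge0$ under the hypothesis $k\ge\Delta-1$; both are short once the linear system is written down.
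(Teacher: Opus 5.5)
Your proposal is correct and follows the paper's proof essentially step for step: the same linear system from vertex counting and suppression of the $2$-vertices, the same divisibility argument forcing $m_{\Delta\Delta}=\Delta-j$ and $n_\Delta=k+1$, and the same path-based construction with $k-(\Delta-j)$ subdivided edges. The only cosmetic difference is that you count the $\Delta\Delta$-edges of the suppressed tree ($n_2+m_{\Delta\Delta}=n_\Delta-1$) where the paper counts all edges of $T$ ($m_{\Delta\Delta}+2n_2+n_1=n-1$); these are equivalent.
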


\begin{proof}
Property $(P_{2}^{\prime\prime})$ implies $n_{\Delta}+n_{2}+n_{1}=n$.
Similarly as before, by suppressing vertices of degree two in $T$ we
obtain a tree $T^{\prime}$ which has the same number of $1$-vertices and $\Delta
$-vertices as $T$, so $n_{1}=\Delta n_{\Delta}-2(n_{\Delta}-1)$. The property
$(P_{1}^{\prime\prime})$ implies $m_{12}=0$, while $(P_{3}^{\prime\prime})$
implies $m_{22}=0$, from which we conclude that each $2$-vertex of $T$ is a
neighbor only to $\Delta$-vertices. This yields $m_{2\Delta}=2n_{2}$. Notice
that only $m_{\Delta\Delta}$, $m_{2\Delta}$ and $m_{1\Delta}$ contribute to
$\left\vert E(T)\right\vert $, so from $m_{1\Delta}=n_{1}$ we obtain
$m_{\Delta\Delta}+m_{2\Delta}+n_{1}=n-1$.

Hence, we obtain a system of four linear equations in terms of $n_{1},n_{2},n_{\Delta},m_{2\Delta}$ and $m_{\Delta\Delta}$ with the solution
\begin{align*}
n_{1}  &  =\frac{1}{\Delta}\left(n\Delta-2n+\Delta m_{\Delta\Delta}+\Delta-2m_{\Delta\Delta}+2\right)  ,\\
n_{2}  &  =\frac{1}{\Delta}\left(n-\Delta m_{\Delta\Delta}-\Delta+m_{\Delta\Delta}-1\right)  ,\\
n_{\Delta}  &  =\frac{1}{\Delta}\left(  n+m_{\Delta\Delta}-1\right)  ,\\
m_{2\Delta}  &  =\frac{1}{\Delta}\left(2n-2\Delta m_{\Delta\Delta}-2\Delta+2m_{\Delta\Delta}-2\right)  .
\end{align*}
Since $n_{\Delta}$ must be an integer and $n=k\Delta+1+j$, we obtain
$m_{\Delta\Delta}=\Delta-j$.
Since the value of $j$ is determined by $n$, this means $m_{\Delta\Delta}$ is also determined by $n$.
Consequently, $n_{1}$, $n_{2}$, $n_{\Delta}$ and $m_{2\Delta}$ are determined by $n$.
Since $\sigma(T)=m_{2\Delta}(\Delta-2)^{2}+m_{1\Delta}(\Delta-1)^{2}$ and
$m_{1\Delta}=n_{1}$, we conclude that $\sigma(T)$ is determined by $n$ and it
does not depend on $T$, so all trees of $\mathcal{T}_{n,\Delta}^{\prime\prime
}$ have the same value of $\sigma$-irregularity as claimed.
Observe that substituting $\Delta-j$ for $m_{\Delta\Delta}$ gives $n_{\Delta}=k+1$, $n_2=k+j-\Delta$ and $n_1=k\Delta+\Delta-2k$.

It remains to show that the family $\mathcal{T}_{n,\Delta}^{\prime\prime}$ is
not empty.
Let $T$ be a tree obtained from a path $z_{1}z_{2}\cdots z_{k+1}$
by subdividing each of the edges $z_{i}z_{i+1}$ for $1\leq i\leq k-(\Delta-j)$
with precisely one new vertex, and then attaching leaves to each $z_{i}$ so
that the degree of $z_{i}$ in $T$ equals $\Delta$. Notice that $k\geq
\Delta-1\geq\Delta-j$ implies that the construction is well defined, where for
$k=\Delta-j$ no edge of the path is subdivided.
The number of vertices of the constructed tree equals $k\Delta+1+j=n$, and it has all the properties $(P_{1}^{\prime\prime})$--$(P_{3}^{\prime\prime})$,
with $m_{\Delta\Delta}=\Delta-j$.
\end{proof}

In addition to families $\mathcal{T}_{n,\Delta}^{\prime}$ and
$\mathcal{T}_{n,\Delta}^{\prime\prime}$, one more family of trees is
needed for our characterization.
A vertex of $T$ whose degree belongs to the
set $\{3,\ldots,\Delta-1\}$ will be called a \emph{medium} vertex.
Trees of the third family contain precisely one medium vertex, which is of degree $j+1$, and whose neighbors are $j$ vertices of degree $2$ and one vertex of degree $\Delta$.
More precisely, we define the following family of trees.

\begin{definition}
\label{Def_Tstar}Let $n=k\Delta+1+j,$ where $k\geq\Delta-1$ and $2\leq
j\leq\Delta-2$.
The family $\mathcal{T}_{n,\Delta}^{\prime\prime\prime}$
consists of all trees on $n$ vertices with maximum degree $\Delta$ which have the following properties:

\begin{itemize}
\item[$(P_{1}^{\prime\prime\prime})$] all internal leaves of $T$ have
degree $\Delta$;

\item[$(P_{2}^{\prime\prime\prime})$] all vertices of $T$ have their degree in
the set $\{1,2,\Delta\}$, except precisely one medium vertex $w$ which is of
degree $j+1$;

\item[$(P_{3}^{\prime\prime\prime})$] $m_{22}=0$;

\item[$(P_{4}^{\prime\prime\prime})$] the medium vertex $w$ has precisely $j$
neighbors of degree $2$ and one neighbor of degree $\Delta$;

\item[$(P_{5}^{\prime\prime\prime})$] $m_{\Delta\Delta}=0$.
\end{itemize}

\noindent Notice that for $j\not \in \{2,3,\ldots,\Delta-2\}$ the degree $j+1$
of the vertex $w$ would not be medium, so in that case we consider the family
$\mathcal{T}_{n,\Delta}^{\prime\prime\prime}$ to be empty.
\end{definition}

\begin{proposition}
\label{Prop_Tstar}
Let $n=k\Delta+1+j,$ where $k\geq\Delta-1$ and $2\leq j\leq\Delta-2$.
Then $\mathcal{T}_{n,\Delta}^{\prime\prime\prime}$ is not empty and all trees in $\mathcal{T}_{n,\Delta}^{\prime\prime\prime}$ have the same value of $\sigma$-irregularity.
\end{proposition}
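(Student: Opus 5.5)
We follow the pattern of the proofs of Propositions~\ref{Prop_T1} and~\ref{Prop_T2}: we show that every count $n_i$ and $m_{ij}$ of a tree $T\in\mathcal{T}_{n,\Delta}^{\prime\prime\prime}$ is determined by $n$ (through $k$ and $j$), and then give an explicit construction.

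\emph{Degree counts.} By $(P_{2}^{\prime\prime\prime})$ we have $n=n_{\Delta}+n_{2}+n_{1}+1$. The Handshaking lemma gives $\Delta n_{\Delta}+(j+1)+2n_{2}+n_{1}=2(n-1)$. For a third relation we use the adjacency structure. By $(P_{1}^{\prime\prime\prime})$ every internal leaf is a $\Delta$-vertex, so $m_{12}=0$; also $w$ has no leaf neighbours by $(P_{4}^{\prime\prime\prime})$. Together with $(P_{3}^{\prime\prime\prime})$, each $2$-vertex is therefore adjacent only to $\Delta$-vertices and to $w$. Exactly $j$ of the $2$-vertices are adjacent to $w$, and each of these has one $\Delta$-neighbour.

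Suppressing all $2$-vertices yields a tree $T'$. In $T'$, the vertex $w$ has degree $j+1$ with all neighbours $\Delta$-vertices, and by $(P_{5}^{\prime\prime\prime})$ every edge of $T'$ between two $\Delta$-vertices comes from a subdivided edge. Hence the edges of $T'$ are: $n_1$ edges to leaves, $j+1$ edges at $w$, and $n_2-j$ edges between $\Delta$-vertices, each subdivided exactly once. Counting edges in $T'$ gives $n_1+(j+1)+(n_2-j)=n_\Delta+1+n_1-1$, i.e. $n_2=n_\Delta-1$.

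Solving the three linear equations, we expect $n_{\Delta}=(n-j-2)/\Delta+\ldots$. We simply solve and then use integrality of $n_{\Delta}$ together with $n=k\Delta+1+j$, $0\le j\le\Delta-1$. A quick check gives $(\Delta-2)n_\Delta=n_1-j+1$ and $n_1=n-1-2n_\Delta+1$, hence $\Delta n_\Delta=n-j-1$ and $n_\Delta=k$. This also determines $n_2=k-1$ and $n_1=n-2k$. Integrality thus forces $n_\Delta=k$, so every $n_i$ depends only on $n$.

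\emph{Edge counts.} The nonzero edge classes are $m_{\Delta1}=n_1$, $m_{2,j+1}=j$, $m_{\Delta,j+1}=1$ and $m_{\Delta2}=2n_2-j$. Therefore
\[
\sigma(T)=n_1(\Delta-1)^2+(2n_2-j)(\Delta-2)^2+j(j-1)^2+(\Delta-j-1)^2,
\]
which depends only on $n$, proving the second claim.

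\emph{Nonemptiness.} Take a path $z_1z_2\cdots z_k$, subdivide each edge once, add a vertex $w$ adjacent to $z_1$, and add $j$ further paths $w\,x_i\,y_i$ for $1\le i\le j$, where the $y_i$ are among the $z$'s. A cleaner version: choose $j$ of the $z_i$ (possible since $k\ge\Delta-1\ge j+1$, so $z_1,\dots,z_{j+1}$ exist), join $w$ directly to $z_1$, and join $w$ to $z_2,\dots,z_{j+1}$ through a $2$-vertex each. Connect the remaining $z_{j+1},\dots,z_k$ by a path with each edge subdivided once. Finally pad every $z_i$ with leaves to degree $\Delta$. This needs $\Delta$ to exceed the non-leaf degree at each $z_i$, which holds since those degrees are at most $3$ and $\Delta\ge6$. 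The result is a tree with no $\Delta\Delta$ or $22$ edges whose only internal leaves are $\Delta$-vertices. Its order is $k$ ($z$'s) $+1$ ($w$) $+(k-1)$ ($2$-vertices) $+n_1$, and the leaf count $n_1=k\Delta-2(k-1)-(j+1)+\ldots$ is checked directly to give $k\Delta+1+j$.

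\emph{Main obstacle.} The main obstacle is deriving the third linear relation, $n_2=n_\Delta-1$. This requires the careful accounting in the suppressed tree $T'$, in particular verifying that $(P_{5}^{\prime\prime\prime})$ forces each $\Delta\Delta$ edge of $T'$ to be subdivided exactly once and that $w$ contributes correctly. The rest of the argument is routine linear algebra.
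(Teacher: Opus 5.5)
Your proposal is correct and is essentially the paper's proof. You suppress the $2$-vertices and count edges in the suppressed tree; your relation $n_2=n_\Delta-1$ is equivalent to the paper's $\tfrac12(m_{2\Delta}-j)=n_\Delta-j-1$ via $m_{2\Delta}=2n_2-j$. From this you get $n_\Delta=k$, $n_2=k-1$, $n_1=(\Delta-2)k+j+1$ and the same edge multiplicities. Your ``cleaner'' construction is the paper's star-plus-path tree, except that the unsubdivided edge at $w$ goes to $z_1$ instead of $z_{j+1}$.

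The write-up needs the following corrections:
\begin{itemize}
\item Handshaking in $T'$ gives $(\Delta-2)n_\Delta=n_1-j-1$. Your version $n_1-j+1$, combined with $n_1=n-2n_\Delta$, would yield $\Delta n_\Delta=n-j+1$.
\item No integrality argument is needed, since $\Delta n_\Delta=n-j-1=k\Delta$ directly.
\item The leaf count of your construction is $n_1=k\Delta-(2k-j-1)=(\Delta-2)k+j+1$, not the garbled $k\Delta-2(k-1)-(j+1)+\ldots$.
\item Every $z_i$ has at most $2$ non-leaf neighbours, so $\Delta\ge4$, which follows from $2\le j\le\Delta-2$, already suffices; you do not need $\Delta\ge6$.
\item Delete the first construction attempt, which creates cycles, and the stray guess for $n_\Delta$.
\end{itemize}
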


\begin{proof}
Property $(P_{2}^{\prime\prime\prime})$ implies $n_{\Delta}+1+n_{2}+n_{1}=n$.
By suppressing all vertices of degree two in $T$ we obtain a tree $T^{\prime}$ with the same number of vertices of degrees $\Delta$ and $1$ as $T$, in which the degree of $w$ is still $j+1$.
This yields $n_{1}=\Delta n_{\Delta}+(j+1)-2n_{\Delta}=(\Delta-2)n_{\Delta}+j+1$.
By $(P_{1}^{\prime\prime\prime})$ and $(P_{5}^{\prime\prime\prime})$ each $2$-vertex of $T$ is a neighbor only of $\Delta$-vertices and of $w$.
By property $(P_{4}^{\prime\prime\prime})$, precisely $j$ of the
$2$-vertices of $T$ are neighbors of $w$, and the second neighbor of each of
them is a $\Delta$-vertex, while every other $2$-vertex is a neighbor of two
$\Delta$-vertices.
So $m_{2\Delta}=2n_{2}-j$.
Further, in $T^{\prime}$ the vertex $w$ is adjacent to $j+1$ vertices of degree $\Delta$ and every leaf is adjacent to a vertex of degree $\Delta$.
Since $T^{\prime}$ has $n_{\Delta}+n_{1}+1$ vertices and therefore $n_{\Delta}%
+n_{1}$ edges, it contains precisely $n_{\Delta}-j-1$ edges with both
end-vertices of degree $\Delta$.
By $(P_{5}^{\prime\prime\prime})$, each such edge of $T^{\prime}$ is
subdivided in $T$ by precisely one $2$-vertex, so it contributes
two edges to $m_{2\Delta}$.
Since precisely $j$ edges of $T^{\prime}$
incident to $w$ are subdivided, each of them contributing one edge to
$m_{2\Delta}$, we have $\frac{1}{2}\left(m_{2\Delta}-j\right)=n_{\Delta}-j-1$.

Hence, we obtain a system of four linear equations in terms of
$n_{1},n_{2},n_{\Delta}$ and $m_{2\Delta}$ with the solution
\begin{align*}
n_{1}  &  =\frac{1}{\Delta}\left(n\Delta-2n+2j+2\right)  ,\\
n_{2}  &  =\frac{1}{\Delta}\left(n-\Delta-j-1\right)  ,\\
n_{\Delta}  &  =\frac{1}{\Delta}\left(n-j-1\right)  ,\\
m_{2\Delta}  &  =\frac{1}{\Delta}\left(2n-\Delta j-2\Delta-2j-2\right)  .
\end{align*}
Since $n_{\Delta}$ is integer and $n=k\delta+1+j$, we have $n_{\Delta}=k$, $n_{2}=k-1$, $n_{1}=(\Delta-2)k+j+1$ and $m_{2\Delta
}=2(k-1)-j$.

Notice that $\sigma(T)=m_{1\Delta}(\Delta-1)^{2}+m_{2\Delta}(\Delta
-2)^{2}+m_{2,j+1}(j-1)^{2}+m_{\Delta,j+1}(\Delta-j-1)^{2}$. Property
$(P_{4}^{\prime\prime\prime})$ implies $m_{2,j+1}=j$ and $m_{\Delta,j+1}=1$,
and since the medium vertex $w$ has no leaf neighbor, we have $m_{1\Delta
}=n_{1}$. We conclude that $\sigma(T)$ is determined by $n$ and it does not
depend on $T$, so all trees of $\mathcal{T}_{n,\Delta}^{\prime\prime\prime}$
have the same value of $\sigma$-irregularity as claimed.

It remains to show that the family $\mathcal{T}_{n,\Delta}^{\prime\prime
\prime}$ is not empty.
Let $T$ be a tree obtained from a star with center $w$ and leaves $z_{1},\ldots,z_{j+1}$, extended by the path $z_{j+1}z_{j+2}\cdots z_{k}$.
We subdivide each of its $k$ edges except the edge $wz_{j+1}$ with precisely one new vertex, and then we attach leaves to each $z_{i}$ so that the degree of $z_{i}$ in $T$ equals $\Delta$.
Notice that
$j\leq\Delta-2$ and $k\geq\Delta-1$ imply $k\geq j+1$, so the construction is
well defined, where for $k=j+1$ the extending path is trivial.
The number of vertices of the constructed tree equals $k\Delta+1+j=n$, and it has all the properties $(P_{1}^{\prime\prime\prime})$--$(P_{5}^{\prime\prime\prime})$.
\end{proof}

For an illustration of trees from the families $\mathcal{T}_{n,\Delta
}^{\prime}$, $\mathcal{T}_{n,\Delta}^{\prime\prime}$ and $\mathcal{T}%
_{n,\Delta}^{\prime\prime\prime}$ see Figure \ref{Fig_families}.

\begin{figure}[h]
\begin{center}
\includegraphics[width=11.53cm]{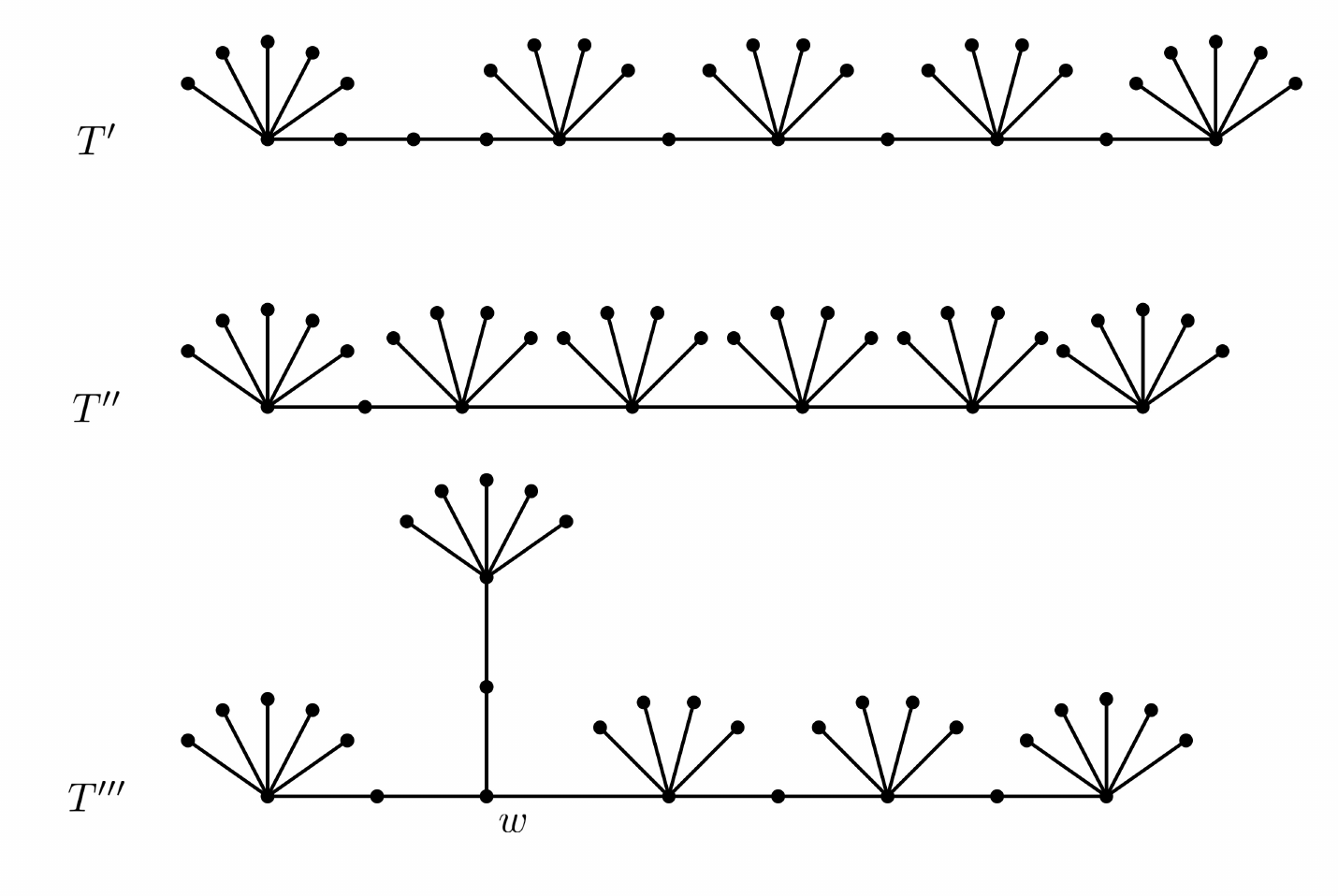}
\end{center}
\caption{The figure shows a tree $T^{\prime}\in\mathcal{T}_{33,6}^{\prime}$, a
tree $T^{\prime\prime}\in\mathcal{T}_{33,6}^{\prime\prime}$ and a tree
$T^{\prime\prime\prime}\in\mathcal{T}_{33,6}^{\prime\prime\prime}$, i.e.
$n=k\Delta+1+j$ for $\Delta=6$, $k=5$ and $j=2$. According to Proposition
\ref{Prop_greatest}, in this case it holds that $\sigma(T^{\prime\prime\prime
})=\sigma(T^{\prime\prime})>\sigma(T^{\prime})$.}%
\label{Fig_families}%
\end{figure}

Propositions \ref{Prop_T1}, \ref{Prop_T2} and \ref{Prop_Tstar} immediately
yield the following corollary.

\begin{corollary}
\label{Cor_comparison}Let $n=k\Delta+1+j,$ where $k\geq\Delta-1$ and $1\leq
j\leq\Delta-1$, be a non-nice integer.
Let $T^{\prime}\in\mathcal{T}_{n,\Delta}^{\prime}$, $T^{\prime\prime}\in\mathcal{T}_{n,\Delta}^{\prime\prime}$, and in the case $2\leq j\leq\Delta-2$ let also
$T^{\prime\prime\prime}\in\mathcal{T}_{n,\Delta}^{\prime\prime\prime}$.
Then
\begin{align*}
\sigma(T^{\prime})  &  =\left\{
\begin{array}
[c]{cc}%
\Delta^{3}k-2\Delta^{2}k-3\Delta k+6k+4\Delta-6 & \text{for }j\leq 2;\\
\Delta^{3}k-2\Delta^{2}k-3\Delta k+6k+2\Delta j+j^{3}-4j^{2}+j &
\text{for }j\geq 3;
\end{array}
\right. \\
\sigma(T^{\prime\prime})  &  =\Delta^{3}k-2\Delta^{2}k-3\Delta k+6k-\Delta^{3}+2\Delta^{2}j+6\Delta^{2}-8\Delta j-7\Delta+8j,\\
\sigma(T^{\prime\prime\prime})  &  =\Delta^{3}k-2\Delta^{2}k-3\Delta
k+6k+4\Delta+j^{3}-j^{2}-6.
\end{align*}
\end{corollary}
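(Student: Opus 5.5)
The plan is a direct substitution: the proofs of Propositions \ref{Prop_T1}, \ref{Prop_T2} and \ref{Prop_Tstar} already determine every edge count $m_{ab}$ as a function of $\Delta$, $k$ and $j$. For each family I will write $\sigma$ as $\sum m_{ab}(a-b)^2$ over the edge types that occur, insert those counts, and expand. Throughout I write $S=\Delta^3k-2\Delta^2k-3\Delta k+6k$. This is exactly the $k$-dependent part of the nice-tree value $2(k-1)(\Delta-2)^2+(n-2k+1)(\Delta-1)^2$ with $n=k\Delta+1$, so each formula reduces to identifying the constant (in $k$) correction. In every family $n-1=k\Delta+j$.

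\emph{Family $\mathcal{T}'$.} For $j\le2$, the proof of Proposition \ref{Prop_T1} gives $n_\Delta=k$, $m_{22}=j$, $m_{\Delta1}=n_1=(\Delta-2)k+2$ and $m_{\Delta2}=n-1-m_{\Delta1}-m_{22}=2k-2$. Edges of type $22$ contribute $0$, so $\sigma=(\Delta-1)^2((\Delta-2)k+2)+(\Delta-2)^2(2k-2)$. The $k$-coefficient expands to $S/k$, and the constant is $2(\Delta-1)^2-2(\Delta-2)^2=4\Delta-6$. This matches the nice value, which is expected because the $22$ edges contribute nothing. For $j\ge3$, $a=j$ and $n_\Delta=k$. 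Then $n_1=(\Delta-2)k+2$ from the stated formula with $a=j$, $m_{a2}=j$, and $m_{\Delta2}=k\Delta+j-(\Delta-2)k-2-j=2k-2$. The correction is therefore $4\Delta-6+j(j-2)^2$. I will check this against the claimed $2\Delta j+j^3-4j^2+j$, and this comparison is the one place I expect trouble. Checking $j=3$ gives $4\Delta-6+3$ against $6\Delta+27-36+3=6\Delta-6$, which disagree. So I will recompute $n_1$ carefully from the handshake relation: $\Delta k+j+2n_2+n_1=2(k\Delta+j)$ together with $n_2=k$ gives $n_1=(\Delta-2)k+j$, not $(\Delta-2)k+2$. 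Redoing the count, $m_{\Delta1}=(\Delta-2)k+j$ and $m_{\Delta2}=2k-j$. The correction becomes $j(\Delta-1)^2-j(\Delta-2)^2+j(j-2)^2=j(2\Delta-3)+j^3-4j^2+4j$, which equals $2\Delta j+j^3-4j^2+j$ as claimed.

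\emph{Family $\mathcal{T}''$.} Using $n_\Delta=k+1$, $n_2=k+j-\Delta$, $m_{1\Delta}=n_1=(\Delta-2)k+\Delta$ and $m_{2\Delta}=2n_2$, I get
$\sigma=(\Delta-1)^2((\Delta-2)k+\Delta)+2(\Delta-2)^2(k+j-\Delta)$.
The $k$ part is again $S$. The constant $\Delta(\Delta-1)^2+2(\Delta-2)^2(j-\Delta)$ expands to $-\Delta^3+2\Delta^2j+6\Delta^2-8\Delta j-7\Delta+8j$.

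\emph{Family $\mathcal{T}'''$.} I will not use the printed $n_1$ directly; instead I will recompute it from the stated linear solution, which gives $n_1=(\Delta-2)k+2$ and $m_{2\Delta}=2(k-1)-j$, together with $m_{2,j+1}=j$ and $m_{\Delta,j+1}=1$. This yields the correction
$2(\Delta-1)^2-(2+j)(\Delta-2)^2+j(j-1)^2+(\Delta-j-1)^2$ relative to $S$.
I will then verify that this expands to $4\Delta+j^3-j^2-6$: the $\Delta^2$ terms cancel as $2-2-j+1$, which needs care. The main obstacle throughout is simply keeping the degree counts consistent where the earlier text has small slips. I will cross-check each family's counts against $\sum m=n-1$ before expanding.
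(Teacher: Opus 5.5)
Your approach is the same as the paper's: read off the edge multiplicities from the proofs of Propositions~\ref{Prop_T1}, \ref{Prop_T2} and \ref{Prop_Tstar}, then expand. Your computations for $\mathcal{T}^{\prime}_{n,\Delta}$ (after your self-correction) and for $\mathcal{T}^{\prime\prime}_{n,\Delta}$ are right. The $\mathcal{T}^{\prime\prime\prime}_{n,\Delta}$ part, however, fails as written.

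The printed value $n_1=(\Delta-2)k+j+1$ is correct, and it is exactly what the linear solution gives. With $n=k\Delta+1+j$, the numerator $n\Delta-2n+2j+2$ equals $(\Delta-2)k\Delta+\Delta(j+1)$. Equivalently, it follows from $n=n_\Delta+1+n_2+n_1$ with $n_\Delta=k$ and $n_2=k-1$.

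Your recomputed value $(\Delta-2)k+2$ is wrong for every $j\geq 2$. The cross-check you announce catches this: with your counts, $\sum m=\Delta k+1$ rather than $n-1=\Delta k+j$. As a consequence, the $\Delta^2$-coefficient of your correction term is $2-2-j+1=1-j\neq 0$, so it does not cancel. Your expression actually equals $4\Delta+j^3-j^2-6-(j-1)(\Delta-1)^2$, and the verification you plan cannot succeed.

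The fix is to use $m_{1\Delta}=(\Delta-2)k+j+1$. The correction relative to $S$ is then
\[
(j+1)(\Delta-1)^2-(j+2)(\Delta-2)^2+j(j-1)^2+(\Delta-j-1)^2 .
\]
Its coefficients are:
\begin{itemize}
\item[] $\Delta^2$: $(j+1)-(j+2)+1=0$;
\item[] $\Delta$: $-2(j+1)+4(j+2)-2(j+1)=4$;
\item[] constant: $(j+1)-4(j+2)+j(j-1)^2+(j+1)^2=j^3-j^2-6$.
\end{itemize}
This gives exactly the stated formula for $\sigma(T^{\prime\prime\prime})$.

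The analogous slip in your $\mathcal{T}^{\prime}_{n,\Delta}$, $j\geq 3$ case was also yours, not the paper's. The formula $n_1=\frac{1}{\Delta}(n\Delta-2n-\Delta+2a+2)$ with $a=j$ already gives $(\Delta-2)k+j$. So the paper's counts can be used directly in all cases.
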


\begin{proof}
The edge multiplicities are determined in the proofs of Propositions~{\ref{Prop_T1}},
{\ref{Prop_T2}} and {\ref{Prop_Tstar}}.
For $T^{\prime}$ with $j\leq2$ we have $m_{\Delta 1}=(\Delta-2)k+2$, $m_{\Delta2}=2(k-1)$ and $\sigma(T^{\prime})=m_{\Delta 1}(\Delta-1)^2+m_{\Delta 2}(\Delta-2)^2$, which gives the result.

For $T^{\prime}$ with $j\geq3$ we have $m_{\Delta1}=(\Delta-2)k+j$, $m_{\Delta2}=2k-j$, $m_{a2}=a=j$ and $\sigma(T^{\prime})=m_{\Delta 1}(\Delta-1)^2+m_{\Delta 2}(\Delta-2)^2+m_{a 2}(a-2)^2$.

For $T^{\prime\prime}$ we have $m_{1\Delta}=(\Delta-2)k+\Delta$, $m_{2\Delta
}=2(k+j-\Delta)$ and $\sigma(T^{\prime})=m_{\Delta 1}(\Delta-1)^2+m_{\Delta 2}(\Delta-2)^2$.

Finally, for $T^{\prime\prime\prime}$ we have $m_{1\Delta
}=(\Delta-2)k+j+1$, $m_{2\Delta}=2(k-1)-j$, $m_{2,j+1}=j$, $m_{\Delta
,j+1}=1$ and $\sigma(T^{\prime})=m_{\Delta 1}(\Delta-1)^2+m_{2\Delta}(\Delta-2)^2+m_{2,j+1}(j-1)^2+m_{\Delta,j+1}(\Delta-j-1)^2$. Expanding the expression yields the resilt.
\end{proof}

Observe that the formula for $T'$ and $j\le 2$ is the same as the formula for a nice tree.
The reason is that the trees $T'$ for $j=1$ and $j=2$ are obtained from nice trees by subdividing $2\Delta$-edges.
Hence, if we admit $j=0$, $m_{22}=0$ and $\sum_{i=3}^{\Delta-1}=0$ in Definition~{\ref{Def_Tprime}}, then for $j=0$ we get nice trees.
Consequently, if $n$ is nice, then the maximal trees on $n$ and $n+1$ vertices achieve the same
value of $\sigma$-irregularity.
Indeed, for $n+1$ we have $j=1$, so the family $\mathcal{T}_{n,\Delta}^{\prime\prime\prime}$ is
empty and $\sigma^{\prime}>\sigma^{\prime\prime}$ by part $(i)$ of Proposition~{\ref{Prop_greatest}} below, since $1<r(\Delta)$ for every $\Delta\geq6$.
Let us stress that this does not extend to $n+2$: for $j=2$ part $(iv)$ of Proposition~{\ref{Prop_greatest}} yields $\sigma^{\prime\prime\prime}=\sigma^{\prime}+4$, and
$\sigma^{\prime\prime\prime}\geq\max\{\sigma^{\prime},\sigma^{\prime\prime}\}$, so the maximum
$\sigma$-irregularity on $n+2$ vertices exceeds the one on $n$ vertices by exactly $4$ for every $\Delta\geq6$.

In the rest of the paper, we denote by $\sigma^{\prime}$, $\sigma
^{\prime\prime}$ and $\sigma^{\prime\prime\prime}$ the common value of
$\sigma$-irregularity of all trees from the family $\mathcal{T}_{n,\Delta
}^{\prime}$, $\mathcal{T}_{n,\Delta}^{\prime\prime}$ and $\mathcal{T}%
_{n,\Delta}^{\prime\prime\prime}$, respectively. Notice that these values are
well defined, since Propositions \ref{Prop_T1}, \ref{Prop_T2} and
\ref{Prop_Tstar} imply that all trees of each of the three families have the
same value of $\sigma$-irregularity, while Corollary \ref{Cor_comparison}
gives these values explicitly.

\begin{proposition}
\label{Prop_greatest}
Let $n=k\Delta+1+j,$ where $k\geq\Delta-1$ and $1\leq
j\leq\Delta-1$, be a non-nice integer, and let $r(\Delta)=\frac{1}{2}\left(
\sqrt{5\Delta^{2}-32\Delta+44}-\Delta+4\right)  $ denote the positive root of
$q(j)=j^{2}+(\Delta-4)j-(\Delta^{2}-6\Delta+7)$. Then the following holds:

\begin{itemize}
\item[\textrm{(i)}]
$\sigma^{\prime}>\sigma^{\prime\prime}$ if $j<r(\Delta)$;

\item[\textrm{(ii)}]
$\sigma^{\prime}=\sigma^{\prime\prime}$ if $j=r(\Delta)$;

\item[\textrm{(iii)}]
$\sigma^{\prime\prime}>\sigma^{\prime}$ if $j>r(\Delta)$;

\item[\textrm{(iv)}] for $2\leq j\leq\Delta-2$ it holds that $\sigma
^{\prime\prime\prime}<\max\{\sigma^{\prime},\sigma^{\prime\prime}\}$, except
the following four cases:
\[
\begin{array}
[c]{ll}
j=2\text{ and }\Delta\geq7: & \sigma^{\prime\prime\prime}=\sigma^{\prime
}+4>\max\{\sigma^{\prime},\sigma^{\prime\prime}\},\\
(\Delta,j)=(8,3): & \sigma^{\prime\prime\prime}=\sigma^{\prime}+2>\max
\{\sigma^{\prime},\sigma^{\prime\prime}\},\\
(\Delta,j)=(6,2): & \sigma^{\prime\prime\prime}=\sigma^{\prime\prime}%
>\sigma^{\prime},\\
(\Delta,j)=(9,3): & \sigma^{\prime\prime\prime}=\sigma^{\prime}>\sigma
^{\prime\prime}.
\end{array}
\]

\end{itemize}
\end{proposition}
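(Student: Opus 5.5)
The plan is to work directly with the explicit formulas of Corollary~\ref{Cor_comparison}. Write $B=\Delta^{3}k-2\Delta^{2}k-3\Delta k+6k$; all three values are $B$ plus a polynomial in $\Delta$ and $j$ only, so every comparison reduces to sign questions for polynomials in $(\Delta,j)$, independent of $k$. The two branches of $\sigma^{\prime}$ will be handled separately.

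For (i)--(iii), first take $j\geq3$. Here I expect
\[
\sigma^{\prime\prime}-\sigma^{\prime}=-j^{3}+4j^{2}+(2\Delta^{2}-10\Delta+7)j-\Delta^{3}+6\Delta^{2}-7\Delta ,
\]
and expanding $(\Delta-j)q(j)$ should give exactly this polynomial. Since $\Delta-j>0$, the sign of $\sigma^{\prime\prime}-\sigma^{\prime}$ is then the sign of $q(j)$. As $q$ is an upward parabola with negative value at $0$, $q(j)$ is negative, zero or positive according as $j<r(\Delta)$, $j=r(\Delta)$ or $j>r(\Delta)$. For $j\in\{1,2\}$ I would use the other branch of $\sigma^{\prime}$ and show that the difference differs from $(\Delta-j)q(j)$ by $(2\Delta+1)j+j^{3}-4j^{2}-4\Delta+6$. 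This correction vanishes at $j=2$ and equals $4-2\Delta<0$ at $j=1$. Moreover $q(1)=-(\Delta-2)(\Delta-5)<0$ for $\Delta\geq6$, so $r(\Delta)>1$ and the case $j=1$ is consistent with (i).

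For (iv), I would compute two differences. The first is
\[
h(j):=\sigma^{\prime\prime\prime}-\sigma^{\prime}=
\begin{cases}
4, & j=2,\\
3j^{2}-j-6-2\Delta(j-2), & j\geq3 .
\end{cases}
\]
The second is
\[
c(j):=\sigma^{\prime\prime\prime}-\sigma^{\prime\prime}=j^{3}-j^{2}-2(\Delta-2)^{2}j+(\Delta-1)(\Delta-2)(\Delta-3).
\]
At $j=2$ we have $c(2)=4-(\Delta-2)q(2)$ with $q(2)=-\Delta^{2}+8\Delta-11$. This gives $c(2)=0$ for $\Delta=6$, and $c(2)>0$ for $\Delta\geq7$ because then $q(2)<0$. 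That produces the listed $j=2$ exceptions.

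For $3\leq j\leq\Delta-2$ it suffices to show that $\min\{h(j),c(j)\}<0$, apart from the listed pairs. The value $h(3)=18-2\Delta$ is positive for $\Delta\leq8$, zero at $\Delta=9$, and equals $2$ at $\Delta=8$; this isolates $(8,3)$ and $(9,3)$. Next, $h$ is a convex quadratic in $j$, so its negativity set is an interval. Likewise $c$ is convex for $j\geq1$, and I plan to check that $c(\Delta-2)=-(\Delta-2)(\Delta-1)<0$. Hence $c<0$ on an interval ending at $\Delta-2$.

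The main obstacle is showing that these two intervals overlap, i.e. that they cover $[3,\Delta-2]$. Asymptotically $h<0$ for $j\lesssim\tfrac{2}{3}\Delta$. On the other hand $c<0$ for $j\gtrsim\tfrac{\sqrt5-1}{2}\Delta\approx0.618\Delta$, since substituting $j=t\Delta$ leads to $t^{3}-2t+1=0$. So the overlap is thin, and I would choose an explicit split point $j^{*}$ near $0.64\Delta$. I would verify $h(3)<0$ and $h(j^{*})<0$ (convexity then gives $h<0$ between them), and $c(j^{*})<0$ (convexity together with $c(\Delta-2)<0$ then gives $c<0$ beyond). This works for all $\Delta$ above some modest bound. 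The finitely many remaining $\Delta$ (roughly $6\leq\Delta\leq15$) would be checked by direct evaluation of $h$ and $c$, which should produce exactly the four listed exceptions.
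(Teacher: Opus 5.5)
Your computations are correct, and your handling of (i)--(iii) is tidier than the paper's. For $j\le 2$ the paper writes $\sigma'-\sigma''=(\Delta-2)\bigl((\Delta-1)(\Delta-3)-2j(\Delta-2)\bigr)$, introduces a second threshold $L(\Delta)=\frac{(\Delta-1)(\Delta-3)}{2(\Delta-2)}$, and compares it with $r(\Delta)$ case by case. Your observation that the correction term vanishes at $j=2$ gives $\sigma''-\sigma'=(\Delta-2)q(2)$ exactly. For $j=1$, the correction term $4-2\Delta$ and $q(1)$ are both negative, so (i) follows. Your $j=2$ part of (iv), via $c(2)=4-(\Delta-2)q(2)$, is also fine; the paper instead combines $A=4$ with part (i) and uses $P=(\Delta-1)(\Delta-6)$ at $j=2$.

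For $3\le j\le\Delta-2$ you orient the problem differently from the paper, and the decisive step is left undone with a calibration that would not work as stated. The paper factors $\sigma'''-\sigma''=(\Delta-j-1)P$ with $P=\Delta^2-(j+5)\Delta-j^2+2j+6$ and fixes $j$. It uses that $A=\sigma'''-\sigma'$ is linear and decreasing in $\Delta$, while $P$ is increasing in $\Delta$ for $\Delta\ge j+2$. So it only has to check $P<0$ at the largest $\Delta$ with $A\ge 0$, i.e. at $\Delta\le(3j^2-j-6)/(2j-4)$. This is done by hand for $j\le 6$ and uniformly for $j\ge 7$.

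Your fix-$\Delta$ convexity argument is valid in principle. However, the two negativity windows are $h<0$ for $j$ below $\alpha(\Delta)=\frac16\bigl(2\Delta+1+\sqrt{4\Delta^2-44\Delta+73}\bigr)\approx\frac{2\Delta-5}{3}$, and $c<0$ for $\rho(\Delta)<j\le\Delta-2$ with $\rho(\Delta)=\frac12\bigl(\sqrt{5\Delta^2-24\Delta+28}-\Delta+2\bigr)\approx 0.618\Delta-1.68$. Their overlap has width only about $0.05\Delta$, so the constant shifts dominate for moderate $\Delta$. With $j^*=0.64\Delta$ you get $h(j^*)=-0.0512\Delta^2+3.36\Delta-6$, which is negative only for $\Delta\ge 64$, not from about $16$ on. At $\Delta=10$ one even has $\rho>\alpha$, so no split point exists there at all; the integers are still covered, since $h(4)=-2$ and $c(5)=-36$.

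A choice that works is $j^*=\frac{2\Delta-6}{3}$. Then $h(j^*)=\frac{24-2\Delta}{3}$ and $c(j^*)=-\frac{(\Delta+3)(\Delta-3)(\Delta-6)}{27}$. Together with $h(3)=18-2\Delta$ and $c(\Delta-2)=-(\Delta-1)(\Delta-2)$, your convexity argument then closes for all $\Delta\ge 13$. Direct evaluation for $6\le\Delta\le 12$ yields exactly $(8,3)$ and $(9,3)$. The paper's orientation avoids this thin-window bookkeeping, because monotonicity in $\Delta$ reduces each $j$ to a single evaluation.
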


\begin{proof}
Notice first that in all the formulas of Corollary \ref{Cor_comparison} the
summands which contain $k$ are the same, namely $\Delta^{3}k-2\Delta
^{2}k-3\Delta k+6k$. Hence, in the difference of any two of the values
$\sigma^{\prime}$, $\sigma^{\prime\prime}$ and $\sigma^{\prime\prime\prime}$
these summands cancel out, so the differences depend only on $\Delta$ and $j$.

$(i)-(iii)$ A direct computation yields
\[
\sigma^{\prime}-\sigma^{\prime\prime}=\left\{
\begin{array}
[c]{ll}%
(\Delta-2)(4j-4\Delta-2j\Delta+\Delta^{2}+3) & \text{for }j\leq2,\\[2pt]%
(j-\Delta)\,q(j) & \text{for }j\geq3.
\end{array}
\right.
\]
Notice that the discriminant $5\Delta^{2}-32\Delta+44$ of $q$ is positive for
$\Delta\geq6$ and that $q(0)<0$, so $q$ has a unique positive root $r(\Delta)$.

Assume first that $j\geq3$. Then $j\leq\Delta-1<\Delta$ implies $j-\Delta<0$,
so the sign of $\sigma^{\prime}-\sigma^{\prime\prime}$ is opposite to the sign
of $q(j)$. Hence, $\sigma^{\prime}-\sigma^{\prime\prime}>0$ holds if and only
if $q(j)<0$, i.e. if and only if $j<r(\Delta)$.
The cases $(ii)$ and $(iii)$ are direct consequences of this observation.

Assume next that $j\leq2$.
In this case $\sigma^{\prime}-\sigma^{\prime\prime
}>0$ holds if and only if $j<L(\Delta)$, where $L(\Delta)=\frac{(\Delta
-1)(\Delta-3)}{2(\Delta-2)}$. For $j=1,$ both inequalities which constitute
$(i)$, namely $j<L(\Delta)$ and $j<r(\Delta),$ hold for every $\Delta\geq6.$
Equality cannot hold in either of them, so the claim $(i)$ holds.
For $j=2$ both inequalities hold if $\Delta\geq7$.
For $\Delta=6$ and $j=2$ we have $j>\frac{15}{8}=L(6)$ and $j>2\sqrt{2}-1=r(6)$, so in this case $(iii)$ applies.

$(iv)$ In order to compare $\sigma^{\prime\prime\prime}$ with $\sigma^{\prime
}$, denote $A=\sigma^{\prime\prime\prime}-\sigma^{\prime}$.
We get
\[
A=\left\{
\begin{array}
[c]{ll}%
j^{2}(j-1) & \text{for }j\leq2,\\[2pt]%
-2\Delta(j-2)+3j^{2}-j-6 & \text{for }j\geq3.
\end{array}
\right.
\]
Similarly, comparing $\sigma^{\prime\prime\prime}$ with $\sigma^{\prime\prime
}$ we obtain $\sigma^{\prime\prime\prime}-\sigma^{\prime\prime}=(\Delta
-j-1)P,$ where
\[
P=\Delta^{2}-(j+5)\Delta-j^{2}+2j+6.
\]
Since $j\leq\Delta-2$ implies $\Delta-j-1\geq1$, the sign of $\sigma
^{\prime\prime\prime}-\sigma^{\prime\prime}$ coincides with the sign of $P$.
Notice further that $A<0$ implies $\sigma^{\prime\prime\prime}<\sigma^{\prime
}\leq\max\{\sigma^{\prime},\sigma^{\prime\prime}\}$, and similarly $P<0$
implies $\sigma^{\prime\prime\prime}<\sigma^{\prime\prime}\leq\max
\{\sigma^{\prime},\sigma^{\prime\prime}\}$. Hence, it is sufficient to show
that for every pair $(\Delta,j)$ outside the four exceptional cases at least
one of the values $A$ and $P$ is negative.

Assume first that $j=2$.
Then $A=4$, so $\sigma^{\prime\prime\prime}$ always exceeds $\sigma^{\prime}$ by $4$, which solves the case $\Delta\ge 7$.
For $\Delta=6$ we have $P=0$, and so $\sigma^{\prime\prime\prime}=\sigma^{\prime\prime}$.
This gives the first and the third exceptional cases.

Assume now that $j\geq3$.
Notice that $A$ is linear and $P$ is a quadratic function of $\Delta$.
And for $3\leq j\leq\Delta-2$, the function $A$ is decreasing, while $P$ is increasing for $\Delta\geq6$.

For $j=3$ we have $A=18-2\Delta$ and $P=\Delta^{2}-8\Delta+3$. If $\Delta
\in\{6,7\}$, then $P<0$. If $\Delta=8$, then $A=2$ and $P=3$, so
$\sigma^{\prime\prime\prime}=\sigma^{\prime}+2$ exceeds both $\sigma^{\prime}$
and $\sigma^{\prime\prime}$, which is the second exceptional case. If
$\Delta=9$, then $A=0$ and $P=12$, so $\sigma^{\prime\prime\prime}%
=\sigma^{\prime}>\sigma^{\prime\prime}$, which is the fourth exceptional case.
Finally, if $\Delta\geq10$, then $A<0$.

Consider next $j\in\{4,5,6\}$. In these three cases $A$ equals $38-4\Delta$,
$64-6\Delta$ and $96-8\Delta$, respectively, so $A\geq0$ holds if and only if
$\Delta\leq9$, $\Delta\leq10$ and $\Delta\leq12$, respectively. For all such
$\Delta$ the value of $P$ is negative, since $P$ is increasing and the value
of $P$ on the right end-point of the $\Delta$ interval is negative for the
corresponding $j.$ Hence, for $j\in\{4,5,6\}$ at least one of the values $A$
and $P$ is negative for every $\Delta$.

Finally, assume that $j\geq7.$ If $A<0,$ then the claim holds, so let us
assume that $A\geq0$. The inequality $A\geq0$ implies $\Delta\leq\left(
3j^{2}-j-6\right)  /(2j-4),$ and for this $\Delta$ the value of $P$ is
negative. Since $P$ is increasing for $\Delta\geq6,$ it follows that $P$ is
negative for all $\Delta$ in this case, which concludes the proof of claim
$(iv)$.
\end{proof}

Let us mention that the equality $\sigma^{\prime}=\sigma^{\prime\prime}$,
i.e. $j=r(\Delta)$, can occur only for $j\geq3$ and requires $5\Delta
^{2}-32\Delta+44$ to be a perfect square.
The smallest instances are $(\Delta,j)\in\{(14,7),(77,46),(509,313)\}$.

Now we unify the families $\mathcal{T}_{n,\Delta}^{\prime}$,
$\mathcal{T}_{n,\Delta}^{\prime\prime}$ and $\mathcal{T}_{n,\Delta}%
^{\prime\prime\prime}$ into one family of extremal trees according to
Proposition \ref{Prop_greatest}.
Recall that for a nice $n$ we have
$\mathcal{T}_{n,\Delta}=\mathcal{T}_{n,\Delta}^{\prime\prime}$.
In this case $\mathcal{T}_{n,\Delta}$ consists of nice trees.

For $n=k\Delta+1+j$, where $k\geq\Delta-1$ and
$1\leq j\leq\Delta-1$, we define the family of trees%
\[
\mathcal{T}_{n,\Delta}=\left\{
\begin{array}
[c]{ll}%
\mathcal{T}_{n,\Delta}^{\prime\prime\prime} & \text{if }j=2\text{ and }%
\Delta\geq7,\text{ or }(\Delta,j)=(8,3),\\
\mathcal{T}_{n,\Delta}^{\prime\prime}\cup\mathcal{T}_{n,\Delta}^{\prime
\prime\prime} & \text{if }(\Delta,j)=(6,2),\\
\mathcal{T}_{n,\Delta}^{\prime}\cup\mathcal{T}_{n,\Delta}^{\prime\prime\prime}
& \text{if }(\Delta,j)=(9,3),\\
\mathcal{T}_{n,\Delta}^{\prime} & \text{if }j<r(\Delta),\text{ in all other
cases},\\
\mathcal{T}_{n,\Delta}^{\prime}\cup\mathcal{T}_{n,\Delta}^{\prime\prime} &
\text{if }j=r(\Delta),\text{ in all other cases},\\
\mathcal{T}_{n,\Delta}^{\prime\prime} & \text{if }j>r(\Delta),\text{ in all
other cases}.
\end{array}
\right.
\]
Notice that, according to Proposition \ref{Prop_greatest}, the family
$\mathcal{T}_{n,\Delta}$ consists precisely of the trees from those families
among $\mathcal{T}_{n,\Delta}^{\prime}$, $\mathcal{T}_{n,\Delta}^{\prime
\prime}$ and $\mathcal{T}_{n,\Delta}^{\prime\prime\prime}$ whose common value
of $\sigma$-irregularity is the greatest.

By analogy with the characterization of maximal trees for $\Delta=5$ obtained
in \cite{ksvsd-sipmt-2026}, one could expect that for $\Delta\geq6$ a tree on
$n=k\Delta+1+j$ vertices with the maximum degree $\Delta$ is maximal if and
only if it belongs to the family obtained by unifying only $\mathcal{T}%
_{n,\Delta}^{\prime}$ and $\mathcal{T}_{n,\Delta}^{\prime\prime}$ according to
the comparison of $\sigma^{\prime}$ and $\sigma^{\prime\prime}$, i.e. without
the four exceptional cases of Proposition \ref{Prop_greatest}. Theorem
\ref{Tm_general_exact} below shows that such a characterization holds only
after the four exceptional cases are taken into account. Notice that in the
first two exceptional cases the family expected in this way is not maximal at
all, in the last two cases it is maximal but it does not contain all maximal
trees, while for every other pair $(\Delta,j)$ it remains unchanged.

Now we can state the main result of this paper.

\begin{theorem}
\label{Tm_general_exact}
Let $\Delta\geq6$, and let $n=k\Delta+1+j$, where $k\geq
\Delta-1$ and $0\leq j\leq\Delta-1$. Then the maximum $\sigma$-irregularity
over all trees on $n$ vertices with the maximum degree $\Delta$ equals
\[
\max\{\sigma^{\prime},\sigma^{\prime\prime},\sigma^{\prime\prime\prime}\},
\]
where the term $\sigma^{\prime\prime\prime}$ is present only when $2\leq
j\leq\Delta-2$. Moreover, a tree $T$ on $n$ vertices with the maximum degree
$\Delta$ is maximal if and only if $T\in\mathcal{T}_{n,\Delta}$.
\end{theorem}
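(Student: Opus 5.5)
The proof splits into an upper bound and an equality analysis. Fix a maximal tree $T$ on $n=k\Delta+1+j$ vertices with maximum degree $\Delta$. The families $\mathcal{T}_{n,\Delta}^{\prime}$, $\mathcal{T}_{n,\Delta}^{\prime\prime}$, $\mathcal{T}_{n,\Delta}^{\prime\prime\prime}$ are nonempty by Propositions~\ref{Prop_T1}, \ref{Prop_T2} and \ref{Prop_Tstar}. Hence $\sigma(T)\ge\max\{\sigma^{\prime},\sigma^{\prime\prime},\sigma^{\prime\prime\prime}\}$, and Proposition~\ref{Prop_greatest} identifies which families attain this maximum. So it suffices to show that every maximal tree lies in one of the three families, or is nice when $j=0$.

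First we recall the structural lemmas of \cite{ksvsd-sipmt-2026}, which are valid for all $\Delta\ge3$. They say that in a maximal tree every internal leaf has degree $\Delta$, so $m_{12}=0$ and leaves hang only on $\Delta$-vertices. They also say that $2$-vertices are not adjacent to leaves, that $m_{22}$ is small, and that two $\Delta$-vertices are never joined through long paths. Each of these is proved by a local transformation: moving leaves, contracting or subdividing edges, or relocating a pendant path. Such a move keeps $n$ and $\Delta$ fixed and strictly increases $\sigma$.

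Next we add the lemmas specific to $\Delta\ge6$, proved the same way. (a) There is at most one medium vertex. Given two medium vertices of degrees $a\le b$, shifting one pendant branch from the first to the second, or merging them, increases $\sigma$, since $x\mapsto(x-c)^2$ is convex. (b) A medium vertex $w$ of degree $a$ has no leaf neighbours. Its neighbours are $2$-vertices and at most one $\Delta$-vertex. If every neighbour is a $2$-vertex, we are in the configuration of $(P_5^{\prime})$. If exactly one neighbour has degree $\Delta$, we are in $(P_4^{\prime\prime\prime})$ with $a=j+1$. (c) $m_{\Delta\Delta}$ and $m_{22}$ cannot both be positive, since one can trade a $\Delta\Delta$-edge against a $22$-edge by moving a $2$-vertex. (d) $m_{22}\le2$ and $m_{\Delta\Delta}\le\Delta-1$. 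A longer $2$-path can be shortened, and its vertices reused as a new $\Delta$-vertex with leaves; when $m_{\Delta\Delta}\ge\Delta$, $\Delta$ of these edges can be subdivided using the vertices freed by deleting one $\Delta$-vertex.

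With these restrictions, the degree sequence is confined to $\{1,2,\Delta\}$ plus at most one medium vertex. The counting arguments already used in the proofs of Propositions~\ref{Prop_T1}--\ref{Prop_Tstar} then force the parameters: the handshake identity, suppression of $2$-vertices, and integrality of $n_\Delta$ modulo $\Delta$. With no medium vertex we get either $m_{22}=j\le2$, $m_{\Delta\Delta}=0$ (family $^{\prime}$), or $m_{22}=0$, $m_{\Delta\Delta}=\Delta-j$ (family $^{\prime\prime}$). With a medium vertex we get either degree $j$ with all neighbours $2$-vertices (family $^{\prime}$), or degree $j+1$ with one $\Delta$-neighbour (family $^{\prime\prime\prime}$).

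For any other combination we show that $T$ is not maximal. The remaining configurations include mixed cases such as a medium vertex together with $m_{\Delta\Delta}>0$, or a medium vertex of degree $j+1+t\Delta$-type offsets. For each, a direct $\sigma$ computation, as in Corollary~\ref{Cor_comparison}, shows the value is strictly smaller than one of $\sigma^{\prime},\sigma^{\prime\prime},\sigma^{\prime\prime\prime}$. The hypothesis $k\ge\Delta-1$ is used so that the local transformations have room, that is, enough $\Delta$-vertices to subdivide their edges or attach leaves.

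The main obstacle is lemma (d) together with the mixed cases. A single transformation does not increase $\sigma$ uniformly in $(\Delta,j)$; the exceptional pairs $(6,2)$, $(8,3)$ and $(9,3)$ show how delicate the margins are. Here I would first reduce to comparing closed-form expressions in $\Delta$ and $j$, with the $k$-terms cancelling as in the proof of Proposition~\ref{Prop_greatest}. I would then check sign conditions by monotonicity in $\Delta$, and verify small $\Delta$ by hand.
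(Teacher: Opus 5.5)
Your reduction is fine (the families are nonempty, so it suffices to place every maximal tree in one of them), and your items (c) and (d) are in line with Lemmas~\ref{Prop_m22NonZero}(iii), \ref{Prop_basic}(iv) and~\ref{Prop_mdd}. The gap is that the real content of the theorem sits in your item (b) and in the sentence ``for any other combination a direct $\sigma$ computation shows\dots'', and neither is supplied.

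\textbf{Item (b) is really a global statement.} By the suppression count, $m_{\Delta\Delta}=n_\Delta-n_2-r$ with $n_\Delta-n_2=\Delta(n_\Delta-k)+a-j$. So when $n_\Delta=k$ and $m_{\Delta\Delta}=0$, the medium vertex $w$ has exactly $r=a-j$ neighbours of degree $\Delta$. Hence ``at most one'' is equivalent to excluding $a\ge j+2$, and that is the main content of Lemma~\ref{Lemma_onemedium}. The obvious move fails here. Take $(\Delta,j,a)=(6,3,5)$ with $r=2$, $q=3$. Detaching a $\Delta$-neighbour of $w$ and hanging it on a leaf changes $\sigma$ by $6\Delta-42=-6$. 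Yet this tree exceeds both $\sigma^{\prime}$ and $\sigma^{\prime\prime\prime}$, and it is beaten only by $\mathcal{T}_{n,\Delta}^{\prime\prime}$, where $w$ has become a $\Delta$-vertex. So the competitor that wins depends on $(\Delta,j,a)$.

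\textbf{How the paper handles this.} It avoids such structural claims and bounds $\sigma(T)$ instead. It writes $\sigma(T)$ as the linear expression~(\ref{For_starmed}) in $p,q,r$. Shifting one unit from $p$ to $q$ raises it by $2(\Delta-a)$, and from $q$ to $r$ by $2(\Delta-a)(\Delta-2)$. So $\sigma(T)$ is bounded by its value at $p=0$ with $r$ maximal. Next, $n_\Delta\in\{k,k+1\}$ follows from $n_\Delta-n_2\ge0$ and $m_{\Delta\Delta}\le\Delta-1$. The case $n_\Delta=k+1$ gives $\sigma(T)-\sigma^{\prime\prime}\le(a-1)(a-2)(a+3-2\Delta)<0$. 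For $n_\Delta=k$ one gets a bound $g(a)$ with $g(j)=\sigma^{\prime}$ and $g(j+1)=\sigma^*$, where $\sigma^*$ is the $\sigma^{\prime\prime\prime}$ formula extended to all $j$. For $a\ge j+2$, the identities $g(a)-\sigma^*=(a-j-1)R(a)$ and $g(a)-\sigma^{\prime\prime}=(a-\Delta)Q(a)$, with $Q=R+(e+d+2)(j-d)$, $e=a-j-2$, $d=\Delta-1-a$, force one of these differences to be negative. Your plan of ``monotonicity in $\Delta$ plus small cases by hand'' does not say what is compared with what, so it does not address this three-parameter comparison.

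\textbf{Item (a) is not justified by convexity alone.} Moving a neighbour from one medium vertex to another changes $\sigma$ by a quantity containing the neighbour-degree sums $S_u,S_v$. The paper cancels these by adding the two opposite moves, which gives $6(a+b)-2d(x)(b-a+2)-2d(y)(a-b+2)+2\varepsilon$. This is positive only because $x$ and $y$ are light. If the moved neighbours have degree $\Delta$, it becomes $6(a+b)-8\Delta+2\varepsilon$, which is negative, e.g.\ for $a=b=3$. So you need a separate argument when a medium vertex has at least $a-1$ neighbours of degree $\Delta$. The paper hangs one such neighbour on a distant leaf, gaining $f(a)>0$. Otherwise, it chooses two medium vertices at maximum distance, which guarantees light neighbours off the connecting path.

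You also never rule out a medium vertex coexisting with $m_{22}>0$. The paper does this in Lemma~\ref{Prop_m22NonZero}(i),(ii), using the path lemma.
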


\begin{proof}
The proof of Theorem \ref{Tm_general_exact} requires several lemmas on the
structure of maximal trees.
To make the paper easier to read, we state the theorem here, and we give these
lemmas and the proof in the next section.
\end{proof}

\section{The proof of the main theorem}

\label{sec:proof}

In this section we prove Theorem \ref{Tm_general_exact}. We first recall
several structural properties of maximal trees which hold for every
$\Delta\geq3$ and which were established in \cite{ksvsd-sipmt-2026}, namely
Lemmas \ref{Prop_2.1} and \ref{Prop_basic}. These results are stated
here without proofs.
After that, we establish three further properties of
maximal trees which we need in the case $\Delta\geq6$.

\begin{lemma}
\label{Prop_2.1}
Let $T$ be a maximal tree with maximum degree $\Delta$, and let $P=ux\cdots yv$ be a path in $T$.
If $d(u)>d(v)$, then $d(x)\leq d(y)$. Also, if $d(x)>d(y)$, then $d(u)\leq d(v)$.
\end{lemma}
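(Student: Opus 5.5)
We argue by an exchange (switching) argument. Assume $T$ is maximal and $P=ux\cdots yv$ is a path with $d(u)>d(v)$ and $d(x)>d(y)$; we derive a contradiction by building a tree $T^{*}$ on the same vertex set, with the same degree sequence and hence the same maximum degree $\Delta$, and with $\sigma(T^{*})>\sigma(T)$. The two sentences of the lemma are contrapositives of each other, so this one contradiction proves both. We may assume $x\neq v$ and $y\neq u$, since otherwise $P$ is a single edge, $x=v$ and $y=u$, and the two inequalities contradict each other. If $x=y$, then $d(x)>d(y)$ is impossible, so the path has at least two internal vertices, or $P=uxyv$ with $x\ne y$.

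The switch is as follows. Delete the edges $ux$ and $yv$ and add the edges $uy$ and $xv$. Removing $ux$ and $yv$ splits $T$ into three components: one containing $u$, one containing $v$, and the middle one containing the subpath $x\cdots y$. Adding $uy$ and $xv$ reconnects the three pieces into a tree. Every vertex keeps its degree, so only the four edge contributions change. Write $a=d(u)$, $b=d(x)$, $c=d(y)$ and $e=d(v)$. The change is
\[
\sigma(T^{*})-\sigma(T)=(a-c)^{2}+(b-e)^{2}-(a-b)^{2}-(c-e)^{2}=2(ab+ce-ac-be)=2(a-e)(b-c).
\]
By assumption $a>e$ and $b>c$, so this quantity is strictly positive, contradicting the maximality of $T$.

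The only delicate step is checking that $T^{*}$ is a simple tree. We need $uy$ and $xv$ not to be existing edges of $T$, since a duplicate would create a cycle or a multi-edge. Both facts follow because $T$ is a tree and $P$ is its unique $u$--$v$ path of length at least $3$. If $uy$ were an edge, it would form a cycle together with the subpath $u x\cdots y$, which has length at least $2$; the same reasoning rules out $xv$. The degree-preservation observation also shows directly that $T^{*}$ still has maximum degree $\Delta$ and the same order, so $T^{*}$ is a legitimate competitor.
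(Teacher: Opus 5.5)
Your proof is correct: the switch $T-ux-yv+uy+xv$ preserves every degree, so $n$ and $\Delta$ are unchanged. It yields a tree because the two new edges reconnect the three components of $T-ux-yv$, and it changes $\sigma$ by exactly $2(d(u)-d(v))(d(x)-d(y))>0$; your handling of the degenerate cases $x=v$ and $x=y$ is also right. The paper states this lemma without proof, importing it from earlier work, and this degree-preserving exchange is the standard argument behind it, so your approach is essentially the intended one.
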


\begin{lemma}
\label{Prop_basic}Let $T$ be a maximal tree on $n$
vertices with maximum degree $\Delta\geq3$. Then the following holds:

\begin{itemize}
\item[\textrm{(i)}] if $n\geq\Delta+4$, then $T$ contains at least two vertices whose degree is at least~$3$;

\item[\textrm{(ii)}] if $n\geq\Delta+4$, then $m_{21}=0$;

\item[\textrm{(iii)}] if $n\geq7$ for $\Delta=3$ and $n\geq2\Delta$ for
$\Delta\geq4$, then all internal leaves of $T$ have degree $\Delta$;

\item[\textrm{(iv)}] if $n\geq\Delta+4$, then $m_{22}\leq2$.
\end{itemize}
\end{lemma}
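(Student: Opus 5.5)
The plan is to prove each part of Lemma~\ref{Prop_basic} by contradiction, using local transformations. Each transformation keeps the order $n$ and the maximum degree $\Delta$ and strictly increases $\sigma$, which contradicts the maximality of $T$. The tool used throughout is the bookkeeping of how $\sigma$ changes when the degree of one vertex $x$ moves by $\pm1$. The edges incident to $x$ change by $\sum_{y\sim x}\bigl(2(d(x)-d(y))\pm1\bigr)$, and this quantity is convex in $d(x)$. Consequently it pays to push degrees towards the extremes $1$ and $\Delta$. Lemma~\ref{Prop_2.1} serves as a monotonicity constraint: along any path, maximality forbids the degree pattern ``high at one end, high next to the other end''. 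It will be used to locate where leaves or $2$-vertices can be moved without creating new bad edges.

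I would prove the parts in the order (iii), (ii), (iv), (i).

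For (iii), let $u$ be an internal leaf with $d(u)=d<\Delta$ and unique non-leaf neighbour $w$. Since $n\ge 2\Delta$, there is a vertex $x\ne u$ not adjacent to $u$ that has a leaf neighbour. I would detach that leaf from $x$ and attach it to $u$. The $d-1$ leaf edges at $u$ each gain $2d-1$. The edge $uw$ changes by a bounded amount. Against this, $x$ loses at most about $2d(x)$ per incident edge, so the comparison reduces to choosing $x$ of small degree. Lemma~\ref{Prop_2.1} applied to the path $u\,w\cdots x\,\ell$ should guarantee $d(x)\le d(u)$ in the extremal configuration; if no such $x$ exists, I would instead take the leaf from a vertex of degree $\Delta$ and argue via the $\Delta$-vertex.

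For (ii), suppose a pendant path $w\,u\,v$ with $d(u)=2$ and $d(v)=1$. I would delete $v$, which turns $u$ into a leaf of $w$ and gains $2d(w)-3\ge 1$. I would then reinsert one vertex as a new leaf at the vertex where this is cheapest; by (iii) this can be an internal leaf of degree $\Delta$ adjacent to a $2$-vertex, or a vertex subdividing a suitable edge. The hypothesis $n\ge\Delta+4$ ensures that such a place exists.

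For (iv), three $2$-$2$ edges contribute $0$. I would take a maximal path of $2$-vertices, or several such paths, shorten it by one vertex, and reattach that vertex as a new leaf. The host is a vertex of degree $\Delta-1$, if one exists; otherwise I would use a leaf-adjacent position chosen so that the change is at least $(\Delta-2)^2$ minus a bounded loss.

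Part (i) is the base case. If only one vertex $c$ has degree at least $3$, then $T$ is a spider centred at $c$. Since $n\ge\Delta+4$, the legs contain at least three non-leaf vertices besides $c$. I would cut a long leg, turn one of its $2$-vertices into a second hub by moving leaves onto it, and compare explicitly.

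The main obstacle is part (ii), and the reinsertion step in parts (ii) and (iv) more generally. There one must exhibit a host vertex whose degree change costs less than the gain, uniformly for all $\Delta\ge3$. I would try first to choose the host as a vertex of minimum degree among the non-leaf neighbours of a $\Delta$-vertex, and then settle the small leftover cases $n\in[\Delta+4,2\Delta)$ by direct enumeration.
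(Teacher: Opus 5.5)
The paper gives no proof of this lemma; it quotes it without proof from the earlier $\Delta=5$ paper. Your plan therefore has to stand on its own. Part (i) is workable in outline, but (ii)--(iv) have genuine gaps.

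The clearest failure is in (iii). Lemma~\ref{Prop_2.1} applied to $u\,w\cdots x\,\ell$ gives $d(w)\le d(x)$, not $d(x)\le d(u)$. Applied instead to paths $y\,y'\cdots u\,\ell_u$, with $\ell_u$ a leaf of $u$, it shows the following: with $T$ rooted at $u$, every vertex having a non-leaf child has degree at most $d(u)$. Hence every $\Delta$-vertex is an internal leaf with $\Delta-1$ leaf neighbours, and these may be the only vertices other than $u$ that carry leaves. Take such an $x$ with non-leaf neighbour $x'$ and move one of its leaves to $u$. This changes $\sigma$ by exactly $2(d(x')-d(w))$ when $d(u)=\Delta-1$, and by strictly less when $d(u)<\Delta-1$. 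But Lemma~\ref{Prop_2.1} on $x\,x'\cdots w\,u$ gives $d(x')\le d(w)$, so your fallback never increases $\sigma$. Concretely, for $\Delta=6$ take the path $u\,w\,z$ with $d(u)=5$, $d(w)=2$, $d(z)=6$, completed by leaves, so $n=12=2\Delta$. This tree satisfies Lemma~\ref{Prop_2.1}, and $z$ is the only other vertex with leaves. Your move keeps $\sigma=214$, yet suppressing $w$ and hanging it on $u$ gives $250$. You need a move that feeds $u$ from a different source (e.g.\ a suppressible $2$-vertex), plus an argument that some profitable move of this kind always exists when $n\ge 2\Delta$.

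In (iv) the proposed hosts cannot work, and the hypothesis $m_{22}\ge 3$ is never used. When $m_{22}\neq 0$, Lemma~\ref{Prop_m22NonZero} excludes medium vertices, so for $\Delta\ge 4$ there is no $(\Delta-1)$-vertex. Hanging the freed vertex on a leaf of a $\Delta$-vertex, or inserting it into a $\Delta$--leaf edge, replaces $(\Delta-1)^2$ by $(\Delta-2)^2+1$, a loss of $2\Delta-4$. More fundamentally, a freed $2$-vertex is already available when $m_{22}=1$, and such trees can be maximal (the family $\mathcal{T}_{n,\Delta}^{\prime}$ with $j=1$). So the third $2$-$2$ edge has to do the work. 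One move that uses it: shift surplus $2$-vertices between chains at zero cost until four consecutive $2$-vertices $s_1s_2s_3s_4$ occur. Then remove $s_4$ from the chain and hang it on $s_2$; this raises $\sigma$ by $1+1+4=6$.

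In (ii) the bookkeeping is already off. Deleting $v$ changes $\sigma$ by $(d(w)-1)^2-(d(w)-2)^2-1=2d(w)-4$, which is $0$ for $d(w)=2$. A $\Delta$-vertex cannot receive a new leaf, and appealing to (iii) requires $n\ge 2\Delta$. The range $\Delta+4\le n<2\Delta$ cannot be handled by ``direct enumeration'', because $\Delta$ is arbitrary. So the reinsertion step, which you yourself flag as the main obstacle, is not supplied.
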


\bigskip

Now we prove some additional properties of maximal trees.

\begin{lemma}
\label{Prop_mdd}
Let $T$ be a maximal tree with maximum degree $\Delta \geq6$.
Then $m_{\Delta\Delta}\leq\Delta-1$.
\end{lemma}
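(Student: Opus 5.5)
The plan is to argue by contradiction with a local transformation that keeps the number of vertices and the maximum degree fixed but strictly increases $\sigma$. Suppose $T$ is maximal with maximum degree $\Delta\geq6$ and $m_{\Delta\Delta}\geq\Delta$.

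\textbf{Setting up.} The $\Delta\Delta$-edges form a forest on the $\Delta$-vertices, so there are at least $\Delta+1$ vertices of degree $\Delta$. Hence $n\geq(\Delta+1)+(\Delta+1)(\Delta-2)+2>2\Delta$ and $n\geq\Delta+4$, so Lemma~\ref{Prop_basic} applies. Since $T$ is not a star, it has an internal leaf $x$. By Lemma~\ref{Prop_basic}(iii), $d(x)=\Delta$. So $x$ has $\Delta-1$ leaf neighbours and exactly one non-leaf neighbour $w$, with $d(w)\geq2$.

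\textbf{The transformation.} Build $T^{\ast}$ from $T$ in three moves.
\begin{itemize}
\item[(1)] Delete $x$ and its $\Delta-1$ leaves, and attach one new leaf to $w$. This lowers the vertex count by $\Delta-1$ and leaves $d(w)$ unchanged.
\item[(2)] Choose $\Delta-1$ of the $\Delta\Delta$-edges of $T$ other than $wx$. This is possible because $m_{\Delta\Delta}\geq\Delta$ and at most one of them, namely $wx$, is destroyed in step (1).
\item[(3)] Subdivide each chosen edge with one new vertex, which restores the vertex count to $n$.
\end{itemize}
No surviving vertex changes degree. Every chosen edge had both ends of degree $\Delta$, and $x$ was not among those ends. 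So $T^{\ast}$ has $n$ vertices, and the other $\Delta$-vertices remain, so its maximum degree is still $\Delta$. The only edges whose contributions change are the following.
\begin{itemize}
\item The $\Delta-1$ optimal edges at $x$ are lost, costing $(\Delta-1)^{3}$.
\item The edge $wx$, contributing $(d(w)-\Delta)^{2}$, is replaced by a pendant edge at $w$, contributing $(d(w)-1)^{2}$.
\item Each of the $\Delta-1$ subdivided edges changes from contribution $0$ to $2(\Delta-2)^{2}$.
\end{itemize}

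\textbf{The computation.} With $d=d(w)$, the change is
\[
\sigma(T^{\ast})-\sigma(T)=(\Delta-1)\bigl(2(\Delta-2)^{2}-(\Delta-1)^{2}\bigr)+(2d-\Delta-1)(\Delta-1).
\]
The first bracket equals $\Delta^{2}-6\Delta+7$. The second term is increasing in $d$, so it is smallest at $d=2$, where it equals $(3-\Delta)(\Delta-1)$. Therefore
\[
\sigma(T^{\ast})-\sigma(T)\geq(\Delta-1)(\Delta^{2}-7\Delta+10)=(\Delta-1)(\Delta-2)(\Delta-5),
\]
which is positive for $\Delta\geq6$. This contradicts the maximality of $T$. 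The bound also shows why the hypothesis $\Delta\geq6$ is needed.

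The main obstacle is the bookkeeping: we must be sure the edge $wx$ is not among the subdivided edges, and that no degree other than those accounted for changes. Both are handled above by keeping $d(w)$ fixed through the added pendant leaf, and by choosing the $\Delta-1$ edges from the $\Delta\Delta$-edges remaining after step (1).
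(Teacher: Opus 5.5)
Your proof is correct and is essentially the paper's argument. The paper relocates the $\Delta-1$ pendant leaves of an internal leaf $u$ (of degree $\Delta$ by Lemma~\ref{Prop_basic}(iii)) so that they subdivide $\Delta-1$ edges $x_iy_i$ with $d(x_i)=d(y_i)=\Delta$ not incident to $u$; this turns $u$ into a leaf of its non-leaf neighbour $v$, and your delete-and-reattach construction yields an isomorphic tree with the same difference $(\Delta-1)\bigl((\Delta-6)(\Delta-1)+2d(v)\bigr)$, whose minimum at $d(v)=2$ is your bound $(\Delta-1)(\Delta-2)(\Delta-5)>0$.
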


\begin{proof}
Assume to the contrary that $m_{\Delta\Delta}\geq\Delta$.
Then $T$ contains two adjacent vertices of degree $\Delta$, so $n\geq2\Delta$.
Let $u$ be an internal leaf of $T$.
Part $(iii)$ of Lemma \ref{Prop_basic} implies that $u$ is a vertex of degree $\Delta$.
Since $m_{\Delta\Delta}\geq \Delta$, there exist at least $\Delta-1$ edges in $T$ with both end-vertices of degree $\Delta$ which are not incident to $u$.
Denote these edges by $x_{i}y_{i}$ for $i=1,\ldots,\Delta-1$.
Also, since $u$ is an internal leaf, it has $\Delta-1$ neighbors which are leaves, denote them by $u_{i}$ for $i=1,\ldots,\Delta-1$.
Let $v$ be the neighbor of $u$ which is not a leaf.
Denote $E^{-}=\{x_{i}y_{i},u_{i}u:i=1,\ldots,\Delta-1\}$ and $E^{+}%
=\{x_{i}u_{i},u_{i}y_{i}:i=1,\ldots,\Delta-1\}$.
Consider the tree $T^{\prime}=T-E^{-}+E^{+}$, and notice that edges $x_{i}y_{i}$ contribute to
$\sigma(T^{\prime})-\sigma(T)$ by zero, edges $u_{i}u$ by $-(\Delta-1)^{2}$,
and edges $x_{i}u_{i}$ and $u_{i}y_{i}$ by $(\Delta-2)^{2}$.
Hence, we have
\begin{align*}
\sigma(T^{\prime})-\sigma(T)  &  =-(\Delta-1)(\Delta-1)^{2}+2(\Delta
-1)(\Delta-2)^{2}+(1-d(v))^{2}-(\Delta-d(v))^{2}\\
&  =\left(  \Delta-1\right)  \left((\Delta-6)(\Delta-1)+2d(v)\right)  .
\end{align*}
Since $d(v)\geq2$, it follows that for $\Delta\geq6$ we have $\sigma
(T^{\prime})-\sigma(T)>0$, a contradiction.
\end{proof}

Let us mention that for $\Delta=5$ the sharper bound $m_{\Delta\Delta}\leq3$
is obtained in \cite{ksvsd-sipmt-2026} by a similar argument.

\begin{lemma}
\label{Prop_m22NonZero}
Let $T$ be a maximal tree with maximum degree $\Delta$.
If $m_{22}\not =0$, then the following holds:

\begin{itemize}
\item[\textrm{(i)}] every neighbor of $u\in V(T)$ with $d(u)\geq3$ is either a
leaf or a vertex of degree $2$;

\item[\textrm{(ii)}] $\sum_{i=3}^{\Delta-1}n_{i}=0$;

\item[\textrm{(iii)}] $m_{\Delta\Delta}=0$.
\end{itemize}
\end{lemma}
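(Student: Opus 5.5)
We plan to argue by a single local transformation, in the spirit of the proof of Lemma~\ref{Prop_mdd}: a $22$-edge lets us ``move'' a $2$-vertex without changing the degree sequence. Let $ab$ be an edge with $d(a)=d(b)=2$, and let $a'$ be the other neighbor of $a$. Contract $a$ out of the path, i.e. replace the edges $a'a,ab$ by $a'b$, and reinsert $a$ by subdividing some other edge $xy$. The degree sequence is unchanged, so only the contributions of the touched edges change. The old edges $a'a,ab$ contribute $(d(a')-2)^2+0$, and the new edge $a'b$ contributes $(d(a')-2)^2$. The net change is therefore
\[
(d(x)-2)^2+(d(y)-2)^2-(d(x)-d(y))^2=2(d(x)-2)(d(y)-2).
\]
We must take $xy\neq a'a,ab$ so that the result is still a tree on the same vertex set.

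By maximality this quantity must be $\le 0$ for every edge $xy$ not in $\{a'a,ab\}$. Since $m_{21}=0$ by Lemma~\ref{Prop_basic}(ii), every vertex has degree $1$ or at least $2$. Hence $2(d(x)-2)(d(y)-2)>0$ exactly when both endpoints have degree at least $3$. So no edge other than possibly $a'a$ joins two vertices of degree $\ge3$. The edge $a'a$ itself has $d(a)=2$, so in fact no edge of $T$ joins two vertices of degree $\ge 3$. This gives (i) once we note that a neighbor of a vertex of degree $\ge3$ must then have degree $1$ or $2$. It also gives (iii) immediately, since a $\Delta\Delta$-edge is such an edge.

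For (ii), suppose $w$ is a medium vertex, $3\le d(w)\le\Delta-1$. By (i) its neighbors have degree $1$ or $2$. The subdivision trick now needs a strictly improving move. A first option is to subdivide an edge $wz$ with $d(z)=1$: the net change is $2(d(w)-2)(-1)<0$, which is not useful. A better option is to change degrees directly. Take a $\Delta$-vertex $v$ (one exists) and a leaf neighbor or $2$-neighbor of $w$, and transfer a pendant structure from $w$ to $v$, raising $d(v)$ above $\Delta$; this is not allowed. So instead I would transfer from $w$ to a $\Delta$-vertex using the $22$-edge. Remove the $2$-vertex $a$ as above, and simultaneously detach one branch of $w$, so that $d(w)$ drops to $d(w)-1$, and hang it on $a$ or convert structure. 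Concretely, the plan is to compare $T$ with the tree obtained by moving one neighbor of $w$ onto $a$ placed as a pendant, and to compute the change, which depends on $d(w)$ and the degrees of $w$'s neighbors, all in $\{1,2\}$.

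The main obstacle is this case analysis in (ii): finding a move that strictly increases $\sigma$ whenever a medium vertex coexists with a $22$-edge. I would first try the following. If $w$ has a leaf neighbor $\ell$, delete $\ell$ and use the freed vertex together with the $22$-path to lengthen a leaf edge at a $\Delta$-vertex. Otherwise all neighbors of $w$ have degree $2$. In that case, try relocating $w$'s entire pendant role by swapping degrees of $w$ and a $2$-vertex on the $22$-edge, and verify positivity by a direct computation using $d(w)\le\Delta-1$.
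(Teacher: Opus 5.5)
Your argument for (i) and (iii) is correct and cleaner than the paper's. Contracting the $2$-vertex $a$ of a $22$-edge and re-inserting it into an edge $xy$ preserves the degree sequence and changes $\sigma$ by exactly $2(d(x)-2)(d(y)-2)$. So maximality forbids any edge joining two vertices of degree at least $3$. This gives (i) without the path lemma (Lemma~\ref{Prop_2.1}) that the paper relies on, and (iii) as a special case; the paper's proof of (iii) is your move with $xy$ a $\Delta\Delta$-edge. You do not need Lemma~\ref{Prop_basic}(ii) for the sign analysis, which is just as well, since it requires $n\geq\Delta+4$ and the statement does not assume that. It suffices that no edge of a tree with at least three vertices joins two leaves.

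The genuine gap is (ii). What you write there is a list of moves to try, not a proof, and the concrete ones go in the wrong direction. Detaching a branch from the medium vertex $w$, or deleting a leaf neighbor of $w$, lowers $d(w)$. By (i) every neighbor $c$ of $w$ has $d(c)\leq2<d(w)$, so this strictly decreases every term $(d(w)-d(c))^{2}$. Nothing in your sketch shows how that loss is recovered; for instance, using the freed vertex to lengthen a $\Delta1$-edge costs a further $2\Delta-4$.

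The missing idea is to \emph{raise} $d(w)$, which is allowed precisely because $d(w)\leq\Delta-1$, using the $22$-edge $ab$ as the donor. Label it so that $w$ lies in the component of $T-ab$ containing $a$, and put $T'=T-ab+bw$. Only the degrees of $w$ (from $i$ to $i+1$) and of $a$ (from $2$ to $1$) change. The new edge $bw$ contributes $(i-1)^{2}$, and by (i) each edge $wc$ with $c\neq a$ gains $2(i-d(c))+1\geq2i-3$. Let $a'$ be the neighbor of $a$ other than $b$. If $a'\neq w$, the edge $aa'$ changes by $2d(a')-3\geq-1$, for a total of at least $i(3i-5)>0$. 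If $a'=w$, the edge $wa$ gains $4i-4$, for a total of at least $3i(i-1)>0$. Either way $\sigma(T')>\sigma(T)$, contradicting maximality, and this degree-raising move is what your (ii) is missing.
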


\begin{proof}
$(i)$ Assume to the contrary, that there exists a vertex $v$ in $T$ such that
$uv\in E(T)$ and $d(v)\geq3$. Also, let $xy\in E(T)$ be an edge with
$d(x)=d(y)=2$. Since $m_{22}\not =0$, such an edge must exist in $T$. We may
assume that vertices $u,v,x,y$ are denoted so that the path of $T$ which
connects $u$ and $y$ contains vertices $v$ and $x$. Since $d(u)>d(y)$,
Lemma \ref{Prop_2.1} implies $d(v)\leq d(x)$, a contradiction.

$(ii)$ Assume to the contrary that $T$ contains a vertex $x$ with $d(x)=i$ for
some $3\leq i\leq\Delta-1$.
Since $m_{22}\not =0$, there is an edge $ab\in E(T)$ with $d(a)=d(b)=2$.
Removing $ab$ splits $T$ into two components.
Since $d(x)\geq3$ we have $x\not \in \{a,b\}$, so we may label the end-vertices of
$ab$ so that $x$ lies in the component containing $a$.
In particular $b$ is not adjacent to $x$ in $T$.
Consider the tree
\[
T^{\prime}=T-ab+bx.
\]
Since $x$ and $b$ lie in different components of $T-ab$, the graph $T^{\prime}$
is again a tree; and since $i+1\leq\Delta$, its maximum degree is still
$\Delta$.

Notice that this transformation changes only the degree of $x$, which
increases from $i$ to $i+1$, and the degree of $a$, which decreases from $2$
to $1$, while the degree of $b$ remains $2$.
Hence, the only edges whose contribution to $\sigma$ changes are the deleted edge $ab$, the added edge $bx$, the $i$ edges incident to $x$, and the edge $aa^{\prime}$ which connects $a$ to its neighbor $a^{\prime}\not =b$.
By part $(i)$, every neighbor $c$ of
$x$ satisfies $d(c)\leq2$. Notice that the edge $ab$ contributes zero to the
difference $\sigma(T^{\prime})-\sigma(T)$, the edge $bx$ contributes
$(2-(i+1))^{2}=(i-1)^{2}$, and for every neighbor $c\not =a$ of $x$ the edge
$xc$ contributes $((i+1)-d(c))^{2}-(i-d(c))^{2}=2i-2d(c)+1\geq2i-3$, since
$d(c)\leq2$.

It remains to consider the edge $aa^{\prime}$.
If $a^{\prime}\not =x$, i.e. if $a$ is not a neighbor of $x$, then this edge is distinct from all edges incident to $x$ and it contributes $(d(a^{\prime})-1)^{2}-(d(a^{\prime})-2)^{2}=2d(a^{\prime})-3\ge -1$, since $d(a^{\prime})\ge 1$.
So we have
\[
\sigma(T^{\prime})-\sigma(T)\geq(i-1)^{2}+i(2i-3)-1=i(3i-5)>0.
\]

If $a^{\prime}=x$, i.e. if $a$ is a neighbor of $x$, then the edge
$aa^{\prime}=xa$ accounts for the degree change of both its end-vertices and
contributes $((i+1)-1)^{2}-(i-2)^{2}=4i-4$, while each of the remaining $i-1$
edges incident to $x$ contributes at least $2i-3$.
So we have
\[
\sigma(T^{\prime})-\sigma(T)\geq(i-1)^{2}+(4i-4)+(i-1)(2i-3)=3i(i-1)>0.
\]

In both cases we obtain $\sigma(T^{\prime})>\sigma(T)$, a contradiction with
$T$ being maximal.
Therefore, $T$ contains no vertex of degree $i$ for
$3\leq i\leq\Delta-1$, i.e. $\sum_{i=3}^{\Delta-1}n_{i}=0$.

$(iii)$ Since $m_{22}\not =0$, $T$ contains a subpath $uvwz$, where
$d(v)=d(w)=2$.
Assume to the contrary, that $m_{\Delta\Delta}\not =0$. Let
$xy\in E(T)$ be an edge with $d(x)=d(y)=\Delta$. Let $T^{\prime}%
=T-vw-wz+vz-xy+xw+wy$.
Then only the edges $xw$ and $wy$ contribute to
$\sigma(T^{\prime})-\sigma(T)$, each with $(\Delta-2)^{2}$, since the
contribution of edges $vw$ and $xv$ is $0$.
So we have
\[
\sigma(T^{\prime})-\sigma(T)=2(\Delta-2)^{2}>0,
\]
a contradiction with $T$ being maximal.
\end{proof}

\begin{lemma}
\label{Prop_m22zero}
Let $n=k\Delta+1+j,$ where $k\geq\Delta-1$ and $0\leq
j\leq\Delta-1$, and let $T$ be a maximal tree on $n$ vertices with the maximum
degree $\Delta$. If $j\geq3$, then $m_{22}=0$.
\end{lemma}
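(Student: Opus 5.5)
We argue by contradiction: assume $j\geq3$ and $m_{22}\neq0$. The aim is to show that the structural lemmas already proved pin down $n$ modulo $\Delta$, forcing $j=m_{22}\leq2$.

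\emph{Step 1: collect the structure.} Since $k\geq\Delta-1$, we have $n\geq(\Delta-1)\Delta+1\geq2\Delta$ and $n\geq\Delta+4$ for $\Delta\geq6$, so all parts of Lemma~\ref{Prop_basic} apply. From $m_{22}\neq0$, Lemma~\ref{Prop_m22NonZero} gives two facts: every vertex has degree in $\{1,2,\Delta\}$, and $m_{\Delta\Delta}=0$. Lemma~\ref{Prop_basic} supplies three more: $m_{21}=0$, $1\leq m_{22}\leq2$, and every internal leaf has degree $\Delta$. In particular, no leaf is adjacent to a $2$-vertex. A leaf cannot be adjacent to another leaf, since $n>2$. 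Hence every leaf is adjacent to a $\Delta$-vertex.

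\emph{Step 2: count $n_1$ and $n_2$.} Suppress all $2$-vertices of $T$ to obtain a tree $T^{\ast}$ whose vertices are exactly the $n_{\Delta}$ vertices of degree $\Delta$ and the $n_1$ leaves of $T$.

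First, $n_1$. The handshaking lemma in $T^{\ast}$ gives $\Delta n_{\Delta}+n_1=2(n_{\Delta}+n_1-1)$, hence $n_1=(\Delta-2)n_{\Delta}+2$.

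Next, $n_2$. The maximal chains of $2$-vertices in $T$ are exactly the subdivided edges of $T^{\ast}$.
\begin{itemize}
\item An edge of $T^{\ast}$ incident to a leaf is not subdivided, because $m_{21}=0$.
\item The remaining $n_{\Delta}-1$ edges of $T^{\ast}$ join two $\Delta$-vertices. Each carries at least one $2$-vertex, because $m_{\Delta\Delta}=0$.
\end{itemize}
An edge of $T^{\ast}$ subdivided by $s\geq1$ vertices of degree $2$ produces exactly $s-1$ edges of type $22$. Summing over edges gives
\[
n_2=(n_{\Delta}-1)+m_{22}.
\]

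\emph{Step 3: conclude.} Substituting into $n=n_{\Delta}+n_2+n_1$ yields
\[
n=\Delta n_{\Delta}+1+m_{22}.
\]
Comparing with $n=k\Delta+1+j$, where $0\leq j\leq\Delta-1$ and $1\leq m_{22}\leq2<\Delta$, uniqueness of the residue modulo $\Delta$ forces $j=m_{22}\leq2$. This contradicts $j\geq3$, so $m_{22}=0$.

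The only delicate point is the bookkeeping in Step~2. We must check that leaf-edges of $T^{\ast}$ receive no $2$-vertices and that each $\Delta\Delta$-edge of $T^{\ast}$ receives at least one. These two facts are exactly what $m_{21}=0$ and $m_{\Delta\Delta}=0$ from Lemmas~\ref{Prop_basic} and~\ref{Prop_m22NonZero} guarantee. This is essentially the computation already carried out in the proof of Proposition~\ref{Prop_T1}, now applied to an arbitrary maximal tree.
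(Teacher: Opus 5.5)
Your proof is correct and follows the paper's route. You use Lemmas~\ref{Prop_basic} and~\ref{Prop_m22NonZero} to restrict degrees to $\{1,2,\Delta\}$ with $m_{21}=m_{\Delta\Delta}=0$ and $m_{22}\le 2$, suppress the $2$-vertices to get $n_2=(n_\Delta-1)+m_{22}$, and read off $j=m_{22}$ modulo $\Delta$. The only cosmetic difference is that you apply the handshake lemma in the suppressed tree to get $n_1=(\Delta-2)n_\Delta+2$, whereas the paper applies it in $T$ itself to obtain $m_{22}=\Delta(k-n_\Delta)+j$.
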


\begin{proof}
Assume to the contrary that $m_{22}>0$.
Let $uvwz$ be a subpath of $T$ such that $d(v)=d(w)=2$.
By Lemma~{\ref{Prop_m22NonZero}} $(iii)$, we have $m_{\Delta\Delta}=0$.
Further, part $(ii)$ of Lemma \ref{Prop_m22NonZero} implies that all vertices of $T$ have their
degree in the set $\{1,2,\Delta\}$. Also, part $(ii)$ of Lemma
\ref{Prop_basic} implies $m_{21}=0$, so every leaf of $T$ is adjacent to a
$\Delta$-vertex and both neighbors of every $2$-vertex of $T$ are of the
degree $2$ or $\Delta$.
Hence, by suppressing all vertices of degree two in $T$ we obtain a tree $T^{\prime}$ with the same number of vertices of degrees $\Delta$ and $1$ as $T$.
Since $T^{\prime}$ has $n_{\Delta}+n_{1}$
vertices and therefore $n_{\Delta}+n_{1}-1$ edges, of which $n_{1}$ are
incident to a leaf, it contains precisely $n_{\Delta}-1$ edges with both
end-vertices of degree $\Delta$.
Each such edge of $T^{\prime}$ corresponds to a path in $T$ whose internal vertices are $2$-vertices, where the number of internal vertices on each of these paths is at least one.
Since a path with $i$ internal vertices contains $i-1$
edges with both end-vertices of degree $2$, we have $n_{2}=(n_{\Delta
}-1)+m_{22}$.

On the other hand, from $n_{1}+n_{2}+n_{\Delta}=n$ and the
Handshaking lemma $n_{1}+2n_{2}+\Delta n_{\Delta}=2(n-1)$ it follows that
$n_{2}+(\Delta-1)n_{\Delta}=n-2=k\Delta+j-1$. Substituting $n_{2}%
=m_{22}+n_{\Delta}-1$ into this equality yields $m_{22}=\Delta(k-n_{\Delta
})+j$, so $m_{22}-j$ is divisible by $\Delta$.
Since part $(iv)$ of Lemma \ref{Prop_basic} implies $1\leq m_{22}\leq2$ and since $1\leq j\leq\Delta-1$, we have $m_{22}=j$.
Hence, $j\leq2$, a contradiction with $j\geq3$.
\end{proof}

Let us now show that maximal trees indeed belong to the three families
introduced in the previous section. Throughout the rest of the section we
assume $\Delta\geq6$ and $n=k\Delta+1+j$, where $k\geq\Delta-1$ and $0\leq
j\leq\Delta-1$. Notice that this implies $n\geq2\Delta$, so all the structural
properties established above hold for maximal trees on $n$ vertices. For the
sake of brevity, let us denote
\[
\mu=\max\{\sigma^{\prime},\sigma^{\prime\prime},\sigma^{\prime\prime\prime
}\},
\]
where the term $\sigma^{\prime\prime\prime}$ is omitted when
$j\not \in \{2,\ldots,\Delta-2\}$, and for $j=0$ the value of $\mu$ equals the value of $\sigma$-index of nice trees, see the expression below Lemma~{\ref{Lemma_niceExists}}.
Also, a vertex of degree $1$ or $2$ will be called a \emph{light} vertex.

Recall that by Lemma \ref{Prop_m22NonZero}, a maximal tree with
$m_{22}\not =0$ contains no medium vertex. In the following two lemmas we
therefore consider maximal trees with $m_{22}=0$. Recall also that, since
$n\geq\Delta+4$, part $(ii)$ of Lemma \ref{Prop_basic} yields $m_{12}%
=0$. We show first that a maximal tree with $m_{22}=0$ cannot contain two
medium vertices, and then we bound the value of $\sigma(T)$ in the case of
precisely one medium vertex.

For a vertex $w$ of $T$, we denote $S_{w}=\sum_{yw\in E(T)}d(y)$.
I.e., $S_w$ is the sum of degrees of all neighbors of $w$.

\begin{lemma}
\label{Lemma_exchange}
Let $T$ be a maximal tree with maximum degree $\Delta\geq6$.
If $m_{22}=0$, then $T$ contains at most one medium vertex.
\end{lemma}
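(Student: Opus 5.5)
Suppose, for a contradiction, that $T$ contains two medium vertices $u$ and $w$ with $d(u)=a$, $d(w)=b$ and $3\le a\le b\le\Delta-1$. I would use a branch‑exchange argument: move one branch from $u$ to $w$, or one branch from $w$ to $u$, and show that at least one of the two moves strictly increases $\sigma$. That contradicts maximality. The quantity $S_w$ defined just before the statement enters the bookkeeping naturally.

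\textbf{Setting up the two moves.} Since $a,b\ge3$, $u$ has a neighbor $y\neq w$ that is not on the $u$--$w$ path, and $w$ has such a neighbor $y'\neq u$. Put $p=d(y)$ and $q=d(y')$.
\begin{itemize}
\item $T_1=T-uy+wy$ is again a tree, since $y$ lies in the component of $T-uy$ not containing $w$. Here $d(u)$ drops to $a-1$, $d(w)$ rises to $b+1\le\Delta$, and all other degrees are unchanged.
\item $T_2=T-wy'+uy'$ is the symmetric move.
\end{itemize}
When $u$ and $w$ are non‑adjacent, the only edges whose contribution changes are those at $u$ and at $w$. Summing $(2b+1-2d(z))$ over the neighbors $z$ of $w$, and similarly at $u$, gives
\[
\sigma(T_1)-\sigma(T)=b(2b+1)-2S_w+(b+1-a)(b+1+a-2p)-(a-1)(2a-1)+2(S_u-p),
\]
and the symmetric expression for $T_2$ (swap $a\leftrightarrow b$, $S_u\leftrightarrow S_w$, $p\to q$).

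\textbf{The key cancellation.} Adding the two differences, the terms in $S_u$ and $S_w$ cancel. With $c=b-a\ge0$ a short expansion should give
\[
\bigl(\sigma(T_1)-\sigma(T)\bigr)+\bigl(\sigma(T_2)-\sigma(T)\bigr)=6(a+b)-2(c+2)p-2(2-c)q .
\]
So it suffices to choose $y$ and $y'$ making this positive. Then one of $T_1,T_2$ beats $T$. If instead one difference is already positive by itself, we are done directly.

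\textbf{Choosing small‑degree branches.} Among the neighbors of $u$ (resp.\ $w$) off the $u$--$w$ path, I would take one of minimum degree. The facts available are:
\begin{itemize}
\item $m_{22}=0$;
\item $m_{12}=0$, by Lemma~\ref{Prop_basic}(ii);
\item internal leaves have degree $\Delta$, by Lemma~\ref{Prop_basic}(iii);
\item Lemma~\ref{Prop_2.1}.
\end{itemize}
I would argue that a medium vertex has, off the $u$--$w$ path, a light neighbor or at least a neighbor of controlled degree. Lemma~\ref{Prop_2.1} applied to paths $y\,u\cdots w\,y'$ gives monotonicity constraints between $d(y)$, $d(y')$ and $a,b$, and it should rule out the bad configurations where both $p$ and $q$ are large. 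When $p,q\le2$ the sum is at least $6(a+b)-16>0$. When $p=q$ the condition reduces to $p<\tfrac34(a+b)$.

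\textbf{Adjacent case.} If $u$ and $w$ are adjacent, the edge $uw$ changes at both endpoints, so the computation must be redone. I expect only a bounded correction term: the contribution of $uw$ changes from $(b-a)^2$ to $(b-a+2)^2$ in $T_1$, and to $(b-a-2)^2$ in $T_2$. This keeps the sum positive for $a,b\ge3$.

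\textbf{Main obstacle.} The hard step is the choice of $y$ and $y'$ when all off‑path neighbors of $u$ or $w$ have large degree, for instance degree $\Delta$. If the symmetric sum alone is not enough, I would fall back on a single move, from the lower‑degree medium vertex to the higher one. This uses $S_w\le b\Delta$‑type bounds together with Lemma~\ref{Prop_2.1} to compare $S_u$ with $S_w$, and the hypothesis $\Delta\ge6$ to make the estimate strict.
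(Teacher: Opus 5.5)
Your exchange computation is right and coincides with the paper's Case~2: the formula for $\sigma(T_1)-\sigma(T)$, the cancellation of $S_u$ and $S_w$, the sum $6(a+b)-2(c+2)p-2(2-c)q$, and the extra $+2$ per move when $u$ and $w$ are adjacent all check out. The gap is the step you flag as the main obstacle, and it is more than a technicality: nothing in your argument makes $p,q\le 2$. Lemma~\ref{Prop_2.1} on the path $y\,u\cdots w\,y'$ only gives $(p-q)(a-b)\le 0$, i.e.\ $q\le p$ when $a<b$. That puts no upper bound on $p$. The symmetric sum really can be non-positive: for $a=3$, $b=\Delta-1$, $p=q=\Delta$ it equals $12-2\Delta\le 0$. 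Off-path neighbors that are themselves medium are not excluded either. The single-move fallback via $S_w\le b\Delta$ is not carried out, so the proof does not close. (A smaller point: for $p,q\le 2$ the valid lower bound is $4(a+b-p-q)$, which uses $|p-q|\le 1$. Your bound $6(a+b)-16$ fails for $p=2$, $q=1$, $c\ge 3$, although the sum stays positive.)

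Two ideas are missing.

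First, treat separately a medium vertex $z$ of degree $a$ with at least $a-1$ neighbors of degree $\Delta$. This is exactly the situation where every off-path neighbor may have degree $\Delta$. The paper uses a different move here: delete $zx$ for a $\Delta$-neighbor $x$ of $z$, and join $x$ to a leaf of the component of $T-zx$ containing $z$ at distance at least $2$ from $z$. Such a leaf exists beyond a second $\Delta$-neighbor of $z$. The gain is at least $-3a^2+(4\Delta+3)a+8-10\Delta$, which is positive for $3\le a\le \Delta-1$ when $\Delta\ge 6$. So such vertices cannot occur in a maximal tree at all.

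Second, once every medium vertex has at least two neighbors of degree less than $\Delta$, each of $u,w$ has such a neighbor off the $u$--$w$ path. Choosing $u,w$ as two medium vertices at maximum distance forces these neighbors to be light, since a medium one would give a farther pair. With $p,q\in\{1,2\}$ your symmetric sum is then positive.
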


\begin{proof}
Assume to the contrary, that $T$ contains at least two medium vertices. We say
that a medium vertex of degree $a$ is \emph{$\Delta$-heavy} if at least
$a-1$ of its neighbors are of degree $\Delta$.
We distinguish two cases.

\medskip

\noindent\textbf{Case 1.}
\emph{$T$ contains a $\Delta$-heavy medium vertex}.
Let $w$ be a $\Delta$-heavy medium vertex of degree $a$. 
Let $x$ be a neighbor of $w$ of degree $\Delta$.
Since $w$ has at least one more neighbor of degree
$\Delta$, the component of $T-wx$ containing $w$ contains a leaf $u$ of $T$ at
distance at least two from $w$. Denote by $v$ the neighbor of $u$ and notice
that $d(v)\leq\Delta$. Consider the tree $T^{\prime}=T-wx+xu$. Since $u$ and
$x$ lie in different components of $T-wx$, the graph $T^{\prime}$ is again a
tree. From $T$ to $T^{\prime}$, the contribution to $\sigma$ changes only on
the deleted edge $wx$, on the added edge $xu$, on the $a-1$ edges connecting
$w$ with its remaining neighbors, and on the edge $uv$.
This yields
\begin{align*}
\sigma(T^{\prime})-\sigma(T)  &  =-(\Delta-a)^{2}+(\Delta-2)^{2}+\sum_{yw\in
E(T),\,y\neq x}\left(  (d(y)-(a-1))^{2}-(d(y)-a)^{2}\right) \\
&  \quad+(d(v)-2)^{2}-(d(v)-1)^{2}.
\end{align*}
Each summand of the sum in the above formula equals $2(d(y)-a)+1$, which is an
increasing function of $d(y)$. Since $w$ is $\Delta$-heavy, at most one
neighbor of $w$ is not of degree $\Delta$, and the degree of that neighbor
is at least $1$. Also, it holds that $d(v)\leq\Delta$, and so $(d(v)-2)^2-(d(v)-1)^2=3-2d(v)\ge3-2\Delta$.
Plugging all these
values in the above expression, we obtain
\begin{align*}
\sigma(T^{\prime})-\sigma(T)  &  \ge (\Delta-2)^{2}-(\Delta-a)^{2}
+(a-2)(2\Delta-2a+1)+(2-2a+1)+(3-2\Delta)\\
& = -3a^{2}+\left(  4\Delta+3\right)  a+\left(
8-10\Delta\right)  =f(a).
\end{align*}
Since $f$ is a quadratic function of $a$ with the negative leading
coefficient, from $f(3)=2\Delta-10>0$ and $f(\Delta-1)=\Delta^{2}-5\Delta+2>0$
for $\Delta\geq6$ we conclude that $f(a)>0$ for every $3\leq a\leq\Delta-1$.
Hence, $\sigma(T^{\prime})>\sigma(T)$, a contradiction with $T$ being maximal.

\medskip

\noindent\textbf{Case 2.}
\emph{No medium vertex of $T$ is $\Delta$-heavy.}
In this case every medium vertex has at least two neighbors which are light or medium, and there are at least two medium vertices.
Let $u$ and $v$ be two medium vertices at the biggest distance in $T$.
Since neither $u$ nor $v$ are $\Delta$-heavy, there are edges $xu$ and $vy$ such that both $x$ and $y$ are medium or light and none of them is on the $u-v$ path in $T$.
By the choice of $u$ and $v$, the vertices $x$ and $y$ cannot be medium, so they must be light.
Denote by $a$ ($b$) the degree of $u$ ($v$).

First consider the tree $T_{1}=T-ux+vx$.
Since $v$ and $x$ lie in different components of $T-ux$, the graph $T_{1}$ is
again a tree.
The contribution to $\sigma$ changes only on the deleted edge $ux$, on the added edge $vx$, on the remaining $a-1$ edges incident to $u$, and on the edges incident to $v$.
This yields
\begin{align*}
\sigma(T_{1})-\sigma(T) &  =(d(x)-(b{+}1))^{2}-(d(x)-a)^{2}+\sum_{zu\in E(T),z\neq
x}\left(  (d(z)-(a{-}1))^{2}-(d(z)-a)^{2}\right)  \\
&  \quad+\sum_{zv\in E(T)}\left(  (d(z)-(b{+}1))^{2}-(d(z)-b)^{2}\right)
+\varepsilon\\
&  =3(a+b)(b-a+1)-2d(x)(b-a+2)-2(S_{v}-S_{u})+\varepsilon,
\end{align*}
where $\varepsilon=2$ if $u$ and $v$ are adjacent and $\varepsilon=0$
otherwise.

Now consider the tree $T_{2}=T-vy+uy$.
Analogously as in the case of $T_1$, we get
\begin{align*}
\sigma(T_{2})-\sigma(T) &  =(d(y)-(a{+}1))^{2}-(d(y)-b)^{2}+\sum_{zv\in E(T),z\neq
y}\left(  (d(z)-(b{-}1))^{2}-(d(z)-b)^{2}\right)  \\
&  \quad+\sum_{zu\in E(T)}\left(  (d(z)-(a{+}1))^{2}-(d(z)-a)^{2}\right)
+\varepsilon\\
&  =3(a+b)(a-b+1)-2d(y)(a-b+2)-2(S_{u}-S_{v})+\varepsilon.
\end{align*}

Summing the two differences, the terms with $S_{u}$ and $S_{v}$ cancel out, so
we obtain
\[
\big(\sigma(T_{1})-\sigma(T)\big)+\big(\sigma(T_{2})-\sigma
(T)\big)=6(a+b)-2d(x)(b-a+2)-2d(y)(a-b+2)+2\varepsilon.
\] 
Since $\varepsilon\geq0$ and $d(x),d(y)\in \{1,2\}$, the right-hand side of the above equality
is at least
\begin{align*}
a\big(6+2(d(x)-d(y))\big)+b\big(6-2(d(x)-d(y))\big)-4\big(d(x)+d(y)\big) & \ge \\
4\big(a+b-d(x)-d(y)\big) & >0,
\end{align*}
since $u$ and $v$ are medium vertices while $x$ and $y$ are light.
\end{proof}

\begin{lemma}
\label{Lemma_onemedium}
Let $T$ be a maximal tree with maximum degree $\Delta\geq6$.
If $m_{22}=0$ and $T$ contains precisely one medium vertex,
then $\sigma(T)\leq\max\{\sigma^{\prime},\sigma^{\prime\prime},\sigma
^{\prime\prime\prime}\}$, with equality only if
$T\in\mathcal{T}_{n,\Delta}^{\prime}
\cup\mathcal{T}_{n,\Delta}^{\prime\prime\prime}$.
\end{lemma}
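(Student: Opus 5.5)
The plan is to reduce $T$ to a small number of integer parameters, express $\sigma(T)$ in closed form in terms of them, and compare with $\sigma^{\prime}$, $\sigma^{\prime\prime}$ and $\sigma^{\prime\prime\prime}$ from Corollary~\ref{Cor_comparison}. Let $w$ be the unique medium vertex and $a=d(w)$. Since $n\geq2\Delta$, Lemma~\ref{Prop_basic} gives three facts: every internal leaf has degree $\Delta$, $m_{12}=0$, and every vertex other than $w$ has degree in $\{1,2,\Delta\}$. Combined with $m_{22}=0$, this means every $2$-vertex has both its neighbors in $V_\Delta\cup\{w\}$. Write $p$, $q$, $r$ for the numbers of neighbors of $w$ of degree $1$, $2$, $\Delta$, so $p+q+r=a$.

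\textbf{Counting step.} Suppress all $2$-vertices to obtain a tree $T^{*}$, exactly as in the proofs of Propositions~\ref{Prop_T1}--\ref{Prop_Tstar}. Counting leaves, $\Delta\Delta$-edges of $T^{*}$ and vertices gives
\[
n_1=(\Delta-2)n_\Delta+a,\qquad n_2=n_\Delta-r-m_{\Delta\Delta},\qquad n=\Delta n_\Delta+1+a-r-m_{\Delta\Delta}.
\]
By Lemma~\ref{Prop_mdd} we have $0\le m_{\Delta\Delta}\le\Delta-1$, and also $3\le a\le\Delta-1$. Hence $a-r-m_{\Delta\Delta}\in\{j,\,j-\Delta\}$, that is, $n_\Delta\in\{k,k+1\}$. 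Then
\[
\sigma(T)=m_{1\Delta}(\Delta-1)^2+m_{2\Delta}(\Delta-2)^2+p(a-1)^2+q(a-2)^2+r(\Delta-a)^2,
\]
where $m_{1\Delta}=n_1-p$ and $m_{2\Delta}=2n_2-q$. This is an explicit function of $k,\Delta,a,p,q,r,m_{\Delta\Delta}$.

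\textbf{Structural reduction by exchanges.} Next I would use exchange moves, in the style of Lemma~\ref{Lemma_exchange}, to show that $p=0$ and $r\le1$ in a maximal tree.
\begin{itemize}
\item For $r\ge2$: repeat the Case~1 transformation $T-wx+xu$, where $x$ is a $\Delta$-neighbor of $w$ and $u$ is a far leaf. Its gain no longer needs $w$ to be $\Delta$-heavy; I would redo the bound with $r$ neighbors of degree $\Delta$ and $a-r$ light neighbors and hope it stays positive when $r\ge2$.
\item For $p\ge1$: move a leaf neighbor of $w$ so that it subdivides a $\Delta\Delta$-edge, or reattach it to a $\Delta$-vertex elsewhere, possibly combined with removing a leaf from an internal leaf.
\item For $m_{\Delta\Delta}>0$ together with $n_\Delta=k$: use the subdivision trick from Lemma~\ref{Prop_m22NonZero}~(iii), moving a $2$-vertex adjacent to $w$ onto a $\Delta\Delta$-edge.
\end{itemize}
Whatever configurations survive these moves are then handled by direct comparison of the formula.

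\textbf{Final comparison.} With $p=0$ and $r\in\{0,1\}$ the formula becomes a function of $a$ and $m_{\Delta\Delta}$ only. If $r=0$ and $m_{\Delta\Delta}=0$, then $a=j$ and $T\in\mathcal{T}_{n,\Delta}^{\prime}$ (the case $m_{22}=0$ of Definition~\ref{Def_Tprime}). If $r=1$ and $m_{\Delta\Delta}=0$, then $a=j+1$ and $T\in\mathcal{T}_{n,\Delta}^{\prime\prime\prime}$. In the remaining cases, with $m_{\Delta\Delta}\ge1$ or $n_\Delta=k+1$, I would show the value is strictly below $\sigma^{\prime\prime}$ or $\sigma^{\prime}$. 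Subtracting the common $k$-part, this reduces to a polynomial inequality in $\Delta$, $a$ and $m_{\Delta\Delta}$.

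I expect the main obstacle to be this last step together with the exchange for $r\ge2$. In the wrap-around case $n_\Delta=k+1$, the medium vertex coexists with $\Delta\Delta$-edges, and one must beat $\sigma^{\prime\prime}$ by a margin depending on $\Delta-a-1$. I would first try to reproduce the factorizations $(\Delta-j-1)P$ from Proposition~\ref{Prop_greatest}. If that fails, I would instead apply one more local move, replacing $w$ and one $\Delta\Delta$-edge by a configuration of $\mathcal{T}_{n,\Delta}^{\prime\prime}$, and show the gain is positive.
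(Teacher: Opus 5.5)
Your counting step is correct and matches the paper's: the same identities for $n_1$, $n_2$, $m_{\Delta\Delta}$, the same conclusion $n_\Delta\in\{k,k+1\}$, and the same expression for $\sigma(T)$. The gap is in the structural reduction, specifically in forcing $r\le1$ with the move $T-wx+xu$.

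With $p=0$, $r=2$ and $q=a-2$, this move changes $\sigma$ by $(a-2)(2\Delta-3a+1)$. That is negative once $a>(2\Delta+1)/3$, so no contradiction arises. Concretely, take $\Delta=6$, $j=3$, $n_\Delta=k$, $a=5$, $(p,q,r)=(0,3,2)$ and $m_{\Delta\Delta}=0$, which is realizable for every $k\ge5$. The move loses $6$, and this tree has $\sigma=\sigma^{\prime\prime\prime}+6=\sigma^{\prime}+12=\sigma^{\prime\prime}-12$. So it cannot be ruled out by passing to smaller $d(w)$. What excludes it is $\sigma^{\prime\prime}$, attained by trees with $k+1$ vertices of degree $\Delta$. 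Here the improving move goes the other way: detach a $2$-neighbor $s$ of $w$ from its other neighbor $y$ and add the edge $wy$; this turns $w$ into a $\Delta$-vertex and gains $12$.

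Your final comparison only treats $r\in\{0,1\}$, so it never examines these configurations, and they carry the real content of the lemma. Separately, reattaching a leaf of $w$ to a $\Delta$-vertex, as you suggest for $p\ge1$, would exceed the maximum degree.

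The paper avoids exchanges at this stage entirely:
\begin{itemize}
\item For fixed $n_\Delta$ and $a$, the expression for $\sigma(T)$ is linear in $(p,q,r)$. Shifting one unit from $p$ to $q$ raises it by $2(\Delta-a)$, and from $q$ to $r$ by $2(\Delta-a)(\Delta-2)$. The only constraint is $r\le n_\Delta-n_2$. Hence $\sigma(T)$ is at most the value at $p=0$, $r=\min\{a,n_\Delta-n_2\}$, whatever the actual $p,q,r$ are.
\item For $n_\Delta=k+1$ this gives $\sigma(T)-\sigma^{\prime\prime}\le(a-1)(a-2)(a+3-2\Delta)<0$.
\item For $n_\Delta=k$ it gives a cubic bound $g(a)$, with $g(j)=\sigma^{\prime}$ and $g(j+1)$ equal to the formula for $\sigma^{\prime\prime\prime}$.
\item For $a\ge j+2$, put $e=a-j-2$ and $d=\Delta-1-a$. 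The factorizations $g(a)-g(j+1)=(a-j-1)R(a)$ and $g(a)-\sigma^{\prime\prime}=(a-\Delta)Q(a)$, with $Q(a)=R(a)+(e+d+2)(j-d)$, show that $g(a)$ lies strictly below one of these two values.
\item Equality then forces $n_\Delta=k$, $a\in\{j,j+1\}$, $p=0$ and maximal $r$, i.e. $T\in\mathcal{T}_{n,\Delta}^{\prime}\cup\mathcal{T}_{n,\Delta}^{\prime\prime\prime}$.
\end{itemize}
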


\begin{proof}
Denote by $w$ the medium vertex of $T$ and let $a=d(w)\in\{3,\ldots
,\Delta-1\}$.
Further, denote by $p$, $q$ and $r$ the number of neighbors of
$w$ which are leaves, $2$-vertices and vertices of degree $\Delta$,
respectively, so $p+q+r=a$.
Since $m_{22}=m_{12}=0$, every $2$-vertex of $T$ subdivides an edge between two
vertices of degree at least $3$.
By suppressing all vertices of degree two similarly as in the proof of Proposition \ref{Prop_Tstar}, we obtain
\[
n_{1}=(\Delta-2)n_{\Delta}+a,\qquad n_{2}=n-1-(\Delta-1)n_{\Delta}-a,\qquad m_{\Delta\Delta}=n_{\Delta}-n_{2}-r,
\]
where the last identity follows since the suppressed tree contains precisely
$n_{\Delta}-q-r$ edges having both vertices of degree $\Delta$, and each of the $n_{2}-q$ vertices of degree two which are not adjacent to $w$ subdivides
a distinct one of them.
Since every edge of $T$ connects vertices of degrees $(1,\Delta)$,
$(1,a)$, $(2,\Delta)$, $(2,a)$, $(\Delta,\Delta)$ or $(a,\Delta)$, we have
\begin{equation}
\label{For_starmed}
\sigma(T)=(n_{1}-p)(\Delta-1)^{2}+p(a-1)^{2}+(2n_{2}-q)(\Delta-2)^{2}%
+q(a-2)^{2}+r(\Delta-a)^{2}.
\end{equation}
Notice that replacing one $(\Delta,2)$-edge and one $(a,1)$-edge by one $(\Delta,1)$-edge and one $(a,2)$-edge, i.e. decreasing $p$ by one and increasing $q$ by one, changes the value of (\ref{For_starmed}) by
$(\Delta-1)^{2}+(a-2)^{2}-(a-1)^{2}-(\Delta-2)^{2}=2(\Delta-a)>0$.
Similarly, inserting a 2-vertex into a $(\Delta,\Delta)$-edge instead of $(\Delta,a)$-edge, i.e. decreasing $q$ by one and increasing $r$ by one, changes the value of (\ref{For_starmed}) by $2(\Delta-2)^{2}+(\Delta-a)^{2}-(\Delta-2)^2-(a-2)^{2}=2(\Delta-a)(\Delta-2)>0$.
Hence, when $T$ is maximal, we may assume $p=0$ and take $r$ as
large as possible, subject to $m_{\Delta\Delta}=n_{\Delta}-n_{2}-r\geq0$.

Let us now show that $n_{\Delta}\in\{k,k+1\}$.
Denote $t=n_{\Delta}-n_{2}$.
Substituting the above expression for $n_{2}$ and using $n=k\Delta+1+j$, we obtain
$t=\Delta(n_{\Delta}-k)+(a-j)$.
Since the suppressed tree has $n_{\Delta}$ edges and $m_{12}=m_{22}=0$, we have $t\geq0$.
Since $a\leq\Delta-1$ implies $a-j<\Delta$, this yields $\Delta
(n_{\Delta}-k)>-\Delta$, i.e. $n_{\Delta}\geq k$.
On the other hand, since $m_{\Delta\Delta}\le\Delta-1$ by Lemma~{\ref{Prop_mdd}}, we have $t-a\le n_{\Delta}-n_2-r=m_{\Delta\Delta}\le\Delta-1$, i.e. $\Delta(n_{\Delta}-k)\leq\Delta-1+j\leq2\Delta-2$, so $n_{\Delta}\leq k+1$.
We distinguish two cases.

\medskip

\noindent\textbf{Case 1.}
\emph{$n_{\Delta}=k+1$.}
Notice that in this case the above inequality $\Delta(n_{\Delta}-k)\leq\Delta-1+j$ yields $j\geq1$, so $n$ is not nice.
Further, $n_{2}=k+j+1-\Delta-a$ and $t=\Delta+a-j>a$, so the greatest possible value of $r$ equals $a$, which yields $q=0$ and $m_{\Delta\Delta}=\Delta-j$.
Substituting these values into (\ref{For_starmed}) we obtain
$$
\sigma(T)\le \big((\Delta-2)(k+1)+a\big)(\Delta-1)^2
+2(k+j+1-\Delta-a)(\Delta-2)^2+a(\Delta-a)^2.
$$
By Corollary~{\ref{Cor_comparison}}, for $3\leq a\leq\Delta-1$ we get
$$
\sigma(T)-\sigma^{\prime\prime}\le (a-1)(a-2)(a+3-2\Delta)<0.
$$
Hence, in this case we have $\sigma(T)<\sigma^{\prime\prime}\leq\mu$,
so the inequality is strict.

\medskip

\noindent\textbf{Case 2.}
\emph{$n_{\Delta}=k$.}
Then $n_{2}=k+j-a$ and $t=a-j$, so the
condition $t\geq0$ yields $a\geq j$. The greatest possible value of $r$ equals
$a-j$, which yields $q=j$ and $m_{\Delta\Delta}=0$. Substituting these values
into (\ref{For_starmed}) we obtain
\begin{align*}
\sigma(T)& \le
\big((\Delta-2)k+a\big)(\Delta-1)^2+(2k+2j-2a-j)(\Delta-2)^2\\
& \quad+j(a-2)^2+(a-j)(\Delta-a)^2\\
& = \Delta^{3}k-2\Delta^{2}k-3\Delta k+6k+8j-4\Delta j-2\Delta a^{2}%
+a^{3}+a(2\Delta j+6\Delta-4j-7).
\end{align*}
Denote by $g(a)$ the expression on the right-hand side of the above inequality.

Further, denote by
\[
\sigma^{\ast}=\Delta^{3}k-2\Delta^{2}k-3\Delta k+6k+4\Delta+j^{3}-j^{2}-6
\]
the expression which defines $\sigma^{\prime\prime\prime}$ in Corollary
\ref{Cor_comparison}.
We show that $\sigma^{\ast}\leq\mu$ for every
$0\leq j\leq\Delta-1$.
Namely, for $2\leq j\leq\Delta-2$ we have $\sigma^{\ast}=\sigma^{\prime\prime\prime}\leq\mu$.
For $j=\Delta-1$ a direct computation yields that $\sigma^{\ast}$ equals the value of $\sigma^{\prime\prime}$ from Corollary \ref{Cor_comparison}.
For $j\in\{0,1\}$ the summand $j^{3}-j^{2}$ vanishes, so $\sigma^{\ast}$ reduces to the formula for $\sigma^{\prime}$ with $j\leq2$, which equals $\sigma^{\prime}$ by Corollary
\ref{Cor_comparison} for $j=1$, and for $j=0$ as well, see the remark below Corollary~{\ref{Cor_comparison}}.

A direct computation yields that $g(j)$ equals the formula for $\sigma
^{\prime}$ with $j\ge 3$, while $g(j+1)=\sigma^{\ast}$.
Since $a\geq3$, the value $a=j$ is possible only for $j\geq3$, in which case $g(j)=\sigma^{\prime}\leq\mu$, and the value $a=j+1$ is possible only for
$2\le j\leq\Delta-2$, in which case $g(j+1)=\sigma^{\prime\prime\prime}\leq\mu$.
It remains to prove the strict inequality $g(a)<\mu$ for all the remaining values of $a$, i.e. for every integer $a$ with $\max\{3,j+2\}\leq a\leq\Delta-1$.
(Recall that $a\ge j$ since $t\ge 0$.)

Observe that $g(a)-\sigma^{\ast}=(a-j-1)R(a)$, where
\[
R(a)=a^{2}+(j+1-2\Delta)a+4\Delta+j^{2}-2j-6.
\]
Introducing $e=a-j-2\ge 0$ and $d=\Delta-1-a\geq0$, which measure the
distance of $a$ from the two extreme values of its range, i.e. substituting
$a=j+2+e$ and $\Delta=j+3+e+d$, we obtain
\[
R(a)=j^{2}-j(e+1)-e(e+1)-2d(j+e).
\]
If $R(a)<0$, then $g(a)-\sigma^{\ast}=(a-j-1)R(a)<0$,
so $\sigma(T)\le g(a)<\sigma^{\ast }\leq\mu$.
Assume therefore that $R(a)\geq0$.
In this case we must have
$d<j$, since for $d\geq j$ the inequality $j+e>0$ would imply
\[
R(a)\leq j^{2}-j(e+1)-e(e+1)-2j(j+e)=-j(j+3e+1)-e(e+1)<0,
\]
where the last strict inequality holds since $j+3e+1\ge 2$ for $j\geq1$, while for
$j=0$ we have $e=a-2>0$.
Hence $j>d\geq0$, which means that $n$ is not nice.
By Corollary~{\ref{Cor_comparison}}, we get
$g(a)-\sigma^{\prime\prime}=(a-\Delta)Q(a)$, where
\[
Q(a)=a^{2}-\Delta a-\Delta^{2}+2\Delta j+6\Delta-4j-7=R(a)+(e+d+2)(j-d).
\]
Now $R(a)\geq0$ and $j-d>0$ yield $Q(a)>0$, so from $a<\Delta$ we obtain
$\sigma(T)\le g(a)<\sigma^{\prime\prime}\leq\mu$.

\medskip

Summing up both cases $n_{\Delta}=k+1$ and $n_{\Delta}=k$, we conclude that
$\sigma(T)\leq\mu$.
Moreover, the equality is possible only for $n_{\Delta}=k$ and $a\in\{j,j+1\}$, where it also forces $p=0$ and the greatest possible value of $r$, since the two
replacements of neighbors of $w$ considered above strictly increase the value
of (\ref{For_starmed}).
This corresponds to $T\in\mathcal{T}_{n,\Delta}^{\prime}$ in the case $a=j$ and $r=0$, and to $T\in\mathcal{T}_{n,\Delta}^{\prime\prime\prime}$ in the case $a=j+1$ and $r=1$, where in the latter case $m_{\Delta\Delta}=0$ yields the property $(P_{5}^{\prime\prime\prime})$.
\end{proof}

We are now in a position to prove Theorem \ref{Tm_general_exact}.

\begin{proof}[Proof of Theorem \ref{Tm_general_exact}]
Let $T$ be a maximal tree. Since
$n\geq2\Delta$, parts $(ii)$ and $(iii)$ of Lemma \ref{Prop_basic} imply
that $m_{12}=0$ and that all internal leaves of $T$ are of degree $\Delta$.

Assume first that $m_{22}\not =0$.
Lemma \ref{Prop_m22NonZero} implies that $T$ contains no medium vertex, $m_{\Delta\Delta}=0$, and every neighbor of a vertex of degree at least $3$ is either a leaf or a vertex of degree $2$.
Together with $m_{22}\leq2$ from Lemma~{\ref{Prop_basic}}, this yields that $T$ has all the properties $(P_{1}^{\prime})$--$(P_{5}^{\prime})$.
The counting argument from the proof of Proposition~{\ref{Prop_T1}}, which uses only these properties, then yields $m_{22}=t$ for some $t\in\{1,2\}$ with $t\equiv j\pmod{\Delta}$, so $j=t\in\{1,2\}$ and, in particular, $n$ is not nice.
Hence, $T\in\mathcal{T}_{n,\Delta}^{\prime}$ and
Proposition \ref{Prop_T1} implies $\sigma(T)=\sigma^{\prime}$.

Now assume that $m_{22}=0$.
Lemma~{\ref{Lemma_exchange}} implies that $T$
contains at most one medium vertex.
If $T$ contains no medium vertex, then all
vertices of $T$ have their degree in the set $\{1,2,\Delta\}$.
So $T\in\mathcal{T}_{n,\Delta}^{\prime\prime}$.
Proposition~{\ref{Prop_T2}} and Corollary~{\ref{Def_Tdprime}} imply $\sigma(T)=\sigma^{\prime\prime}$ for $j\ge 1$, while for a nice $n$ the argument below Definition~{\ref{Def_Tdprime}} shows $m_{\Delta\Delta}=0$ and $\sigma(T)$ is evaluated below Lemma~{\ref{Lemma_niceExists}}.
If $T$ contains precisely one medium vertex, then Lemma~{\ref{Lemma_onemedium}} implies $\sigma(T)\leq\mu$, where the equality implies $T\in\mathcal{T}_{n,\Delta}^{\prime}\cup
\mathcal{T}_{n,\Delta}^{\prime\prime\prime}$.

In every case we have $\sigma(T)\leq\mu$. Further, for a non-nice $n$
Propositions \ref{Prop_T1} and \ref{Prop_T2} imply that the families
$\mathcal{T}_{n,\Delta}^{\prime}$ and $\mathcal{T}_{n,\Delta}^{\prime\prime}$
are not empty and that their trees attain the values $\sigma^{\prime}$ and
$\sigma^{\prime\prime}$ given in Corollary \ref{Cor_comparison}, while for
$2\leq j\leq\Delta-2$ Proposition \ref{Prop_Tstar} implies the same for the
family $\mathcal{T}_{n,\Delta}^{\prime\prime\prime}$ and the value
$\sigma^{\prime\prime\prime}$.
For a nice $n$, Lemma~{\ref{Lemma_niceExists}} provides a tree on $n$ vertices, and its $\sigma$-irregularity is counted below Lemma~{\ref{Lemma_niceExists}}.
(See also the remark below Corollary~{\ref{Cor_comparison}}.)
We conclude that the maximum $\sigma$-irregularity over all trees on $n$ vertices with the maximum degree $\Delta$ equals $\mu$.

Finally, since a maximal tree attains $\mu$, from the above analysis it
follows that a maximal tree with $m_{22}\not =0$ belongs to $\mathcal{T}%
_{n,\Delta}^{\prime}$, a maximal tree without medium vertices belongs to
$\mathcal{T}_{n,\Delta}^{\prime\prime}$, and a maximal tree with precisely one
medium vertex belongs to $\mathcal{T}_{n,\Delta}^{\prime}\cup\mathcal{T}%
_{n,\Delta}^{\prime\prime\prime}$. Hence, a maximal tree belongs to a family
among $\mathcal{T}_{n,\Delta}^{\prime}$, $\mathcal{T}_{n,\Delta}^{\prime
\prime}$, $\mathcal{T}_{n,\Delta}^{\prime\prime\prime}$ whose trees attain the
value $\mu$, and the union of those families is precisely $\mathcal{T}%
_{n,\Delta}$. We conclude that every maximal tree belongs to $\mathcal{T}%
_{n,\Delta}$. Conversely, every tree of $\mathcal{T}_{n,\Delta}$ attains the
value $\mu$, so it is maximal, and the proof is complete.
\end{proof}

\section{Concluding remarks}

In this paper we characterized trees which attain the maximum value of
$\sigma$-irregularity among all trees on $n$ vertices with maximum degree
$\Delta\geq6$.
Together with the results of \cite{kpvsd-sict-2024} for
chemical trees, i.e. for $\Delta\leq4$, and of \cite{ksvsd-sipmt-2026} for
$\Delta=5$, this completes the characterization of maximal trees with
prescribed maximum degree for all values of $\Delta$, provided that $n$ is
sufficiently large.
It turned out that, in contrast with the case $\Delta=5$, the
two natural candidate families $\mathcal{T}_{n,\Delta}^{\prime}$ and
$\mathcal{T}_{n,\Delta}^{\prime\prime}$ do not suffice for $\Delta\geq6$.
Namely, in four exceptional cases the third family $\mathcal{T}_{n,\Delta}^{\prime\prime\prime}$ appears:
the infinite family of pairs $(\Delta,j)$ with $j=2$ and $\Delta\geq7$, and the three
individual pairs $(\Delta,j)\in\{(6,2),(8,3),(9,3)\}$.

\bigskip

\begingroup\sloppy
\noindent\textbf{Acknowledgments.}~~The authors acknowledge the partial support
by Slovak research grants APVV 22-0005, APVV 23-0076, VEGA 1/0069/23 and
VEGA 1/0011/25, by ARIS projects J1-3002 and J1-70016, program P1-0383,
bilateral Slovenian-Croatian project BI-HR/25-27-004 and the annual work
program of Rudolfovo, by Project KK.01.1.1.02.0027 co-financed by the European
Regional Development Fund, by Croatian Ministry of Science, Education and
Youth through the bilateral Croatian-Slovenian project 2025--26, and by the
NextGeneration EU foundation via IP-UNIST-17 (GEORAZ).
\par\endgroup

\end{document}